%
%
\documentclass{svjour3}
\usepackage[OT2,OT1]{fontenc}
\newcommand\cyr
{
\renewcommand\rmdefault{wncyr}
\renewcommand\sfdefault{wncyss}
\renewcommand\encodingdefault{OT2}
\normalfont
\selectfont
}
\DeclareTextFontCommand{\textcyr}{\cyr}
\def\cprime{\char"7E }



\usepackage{graphics}
\usepackage{tikz}
\usepackage{pgf}
\usetikzlibrary{arrows,automata}
  \usepackage{color}
  \definecolor{purple}{rgb}{0.5,0,1}
  \definecolor{orange}{cmyk}{0,0.7,1,0}
  \definecolor{midgrey}{gray}{0.5}

  \usepackage{amssymb}
    \usepackage{amsthm}
  \usepackage{amsmath}
  \usepackage{epstopdf}

\newcommand{\bd}{\begin{displaymath}}
\newcommand{\ed}{\end{displaymath}}

\newcommand {\bdm}{\begin{displaymath}}
\newcommand {\edm}{\end{displaymath}}

\newcommand{\ti}{\widetilde}

\newcommand {\bad}{\begin{aligned}}
\newcommand {\ead}{\end{aligned}}
\newcommand {\ben}{\begin{equation}}
\newcommand {\een}{\end{equation}}
\newcommand {\bay}{\begin{array}}
\newcommand {\eay}{\end{array}}
  \makeatletter
  \let\c@equation\undefined
  \let\c@section\undefined
  \let\c@subsection\undefined
  \let\c@zad\undefined
  \makeatother
  \newcounter{section}


  \newcounter{equation}
  
  \newcounter{subsection}[section]

\newcommand{\beq}{\begin{equation}}
\newcommand{\eeq}{\end{equation}}

\newcommand{\papap}{\papa \end{proof}}

\newcommand{\mr}[1]{\mathring{#1}}

\newfont{\smoldita}{cmmib8}
\newfont{\boldita}{cmmib10}
\newfont{\bboldita}{cmmib10}

\newcommand{\nn}{\nonumber}
\newcommand{\e}{\epsilon}

\newcommand{\cl}[2]{\int\limits_{#1}^{#2}}

\newcommand{\la}{\lambda}

\newcommand{\mb}[1]{\boldsymbol{#1}}
\newcommand{\mbb}[1]{\mathbb{#1}}
\newcommand{\mc}[1]{\mathcal{#1}}
\newcommand{\be}{\begin{eqnarray}}
\newcommand{\ee}{\end{eqnarray}}
\newcommand{\ups}{\bar{\mb\upsilon}}

\begin{document}
\title{A note on the Tikhonov theorem on an infinite interval\thanks{Research partially supported by National Science Centre, Poland, grant 2017/25/B/ST1/00051 and the National Research Foundation of South Africa, SARChI grant 8277} }
\author{Jacek Banasiak }

 \institute{Jacek Banasiak \at Department of Mathematics and Applied Mathematics, University of Pretoria, South Africa\\ Institute of Mathematics, \L\'od\'z University of Technology, Poland, \\ Scientific Research Laboratory of Applied Semigroup Research, South Ural State University, Russia\\e-mail:jacek.banasiak@up.ac.za}

\date{ }

\maketitle
\begin{abstract}
In this note we provide a new proof of the Tikhonov theorem for the infinite time interval and discuss some of its applications.
\end{abstract}
\section{Introduction}
 Modern modelling dynamical processes with ordinary differential equations usually leads to very large and complex systems with the coefficients that often widely differ in magnitude. These features make any robust analysis of them close to impossible. In particular, the presence of very small and very large coefficients creates a stiffness in the system that renders standard numerical methods unreliable. At the same time, the presence of such coefficients indicates that the process is driven by mechanisms acting on very different time scales. Then one can hope that there is a dominant time scale; that is, the time scale at which the system, obtained by an appropriate aggregation of much faster and/or much slower processes, will have the same main dynamical features as the original one.

The presence of different time scales in a system is revealed if the nondimensionalization with respect to the chosen reference time unit produces coefficients that are significantly larger (or smaller) than the others. In this paper we will be dealing with systems that can be written in the so-called canonical, or Tikhonov, form
\begin{align}
\mb u_{\e,t}&={\mb f}(\mb u_\e,\mb v_\e,t,\e), \quad \mb u_\e(0) =\mr {\mb u},\nn\\
\e \mb v_{\e,t} &={\mb g}(\mb u_\e,\mb v_\e,t,\e),\quad \mb v_\e(0)= \mr{\mb  v},
\label{CE10}
\end{align}
where $\mbox{}_{,t}$ denotes the time derivative, $\mb f$ and $\mb g$ are sufficiently smooth functions acting from an open subset of $\mbb R^n\times\mbb R^m$  into, respectively, $\mbb R^n$ and $\mbb R^m,$  and $\e$ is a small parameter. As we shall see below,  many more complex systems can be brought to such a form, see e.g. \cite{BanLach} for a systematic approach to a class of such systems. The interpretation of \eqref{CE10} is that the processes described by $\mb g$ happen much faster than those described by $\mb f$ and thus, if we are interested in larger times, it is plausible to assume that the former reach an equilibrium before any significant change occurs in the latter. Hence, for small $\e$, the solution to \eqref{CE10} should be close to the pair consisting of the solution $\bar{\mb v}(t) = \mb{\phi}(\mb u, t)$ to the algebraic equation
\begin{equation}
{\mb g}(\mb u,\mb v,t,0) =0,
\label{qss10}
\end{equation}
called the \textit{quasi steady state}, and the solution $\bar{\mb u}(t)$ of the \textit{reduced equation}
\begin{equation}
\mb u_{,t}={\mb f}(\mb u,\mb\phi(\mb u,t),t,0),\quad \mb u(0) =\mr {\mb u}.
\label{bulk1}
\end{equation}
The validity of such an approximation is the subject of the Tikhonov theorem, see e.g. \cite{TVS,VasBut}, that was also proven by other methods, such as the Center Manifold Theory, see e.g. \cite{Carr,Fen}. The two main assumptions of the Tikhonov theorem are that: a)    \eqref{qss10} admits a single isolated solution and, noting that for any fixed $(\mb u,t)$, $\mb{\phi}(\mb u, t)$ is an equilibrium of the initial layer equation
\begin{equation}
\ti{\mb v}_{\e,\tau} ={\mb g}(\mb u,\ti{\mb v},t,0),
\label{il1}
\end{equation}
where $\tau = t/\e$ but $(\mb u,t)$ are treated as parameters, and b) $\mb{\phi}(\mb u, t)$ is uniformly asymptotically stable in $(\mb u,t).$ The latter assumption is a mathematical expression of the fact that  the quasi steady state is indeed practically reached in the fast time $t/\e$; that is, almost immediately in terms of the slow time $t$.

A problem with the Tikhonov theorem is that the adopted assumptions only ensure that the convergence is valid on finite intervals of time $t$ and thus the approximation \eqref{qss10}, \eqref{bulk1} is useless if one wants to investigate the long term dynamics of \eqref{CE10}. In other words, within the framework of the Tikhonov theorem, one cannot substitute  the quasi steady state into \eqref{CE10} and draw any valid conclusions about the long term dynamics of \eqref{CE10} from the resulting reduced equation. We shall present an example of such a situation in Section \ref{sec1}.

This problem was first addressed in \cite{Hop} where, assuming additionally that the relevant equilibrium of \eqref{bulk1} is uniformly asymptotically stable,  the author used the reverse Lyapunov theorem of \cite{Mass} to construct appropriate Lyapunov functions to push the estimates of the original Tikhonov's proof to infinity. Recently the problem was again picked in \cite{MarCz}, where the authors used the ideas of \cite{Carr} to localize the equations along the quasi steady state and then proved the uniform in time estimates by directly using differential inequalities. The tricky part of this method is that the localization must preserve the properties of $\mb g$ that allow for solving \eqref{qss10} and keep the stability of the localized version of equation  \eqref{bulk1}. Moreover, the localization must be extended close to $t=0$ to a funnel-like domain  to encompass initial conditions $\mr{\mb y}$ that may be far away from the quasi steady state. These, together with the specific form of localization, forced the authors of \cite{MarCz} to consider a restricted form of $\mb g$ which is one-dimensional with a dominant constant coefficient linear part.

The main aim of this paper is to address the restrictions mentioned above. We follow the ideas of \cite{MarCz} but use a different form of the localization that preserves the linear part of $\mb f$ and $\mb g.$ Moreover, to avoid extending the localization to the  funnel-like domain (which requires an additional assumption), we use the  estimates of the original proof of the Tikhonov theorem close to $t=0$ and then employ the differential inequalities only for small initial conditions.

In the present paper we only prove the Tikhonov type result without addressing the order of the convergence, as was done in \cite{MarCz}. The higher order estimates, that can be done using the same approach,  are subject of the forthcoming paper. We also note that our assumptions, while being in line with that of \cite{MarCz}, are stronger then in the original Tikhonov theorem but they are satisfied in most applications and make the proofs more straightforward.

\noindent
\textbf{Acknowledgements.} The author is grateful to Anna Marciniak-Czochra and Thomas Stiehl for bringing his attention to the applications of the localization method in the singular perturbation theory.

\section{Example}
\label{sec1}

In this section, following  \cite[Section 4.3]{BanLach} and \cite{Aug}, we present an example showing that the Tikhonov approximation, being valid on finite time intervals, may not provide any reliable information on the long term dynamics of the original system.
\begin{example}
 We assume that we have the populations of prey and predators, where  the prey can move between two locations, say, grazing grounds and some refuge, while the predators only hunt in the grazing area. The interactions between the predators and the prey are modelled by the mass-action law. We denote by $(n_1,n_2)$  and $p$, respectively,  the prey and the predator populations, and assume that the migrations are fast if compared to the vital
processes. Then
\begin{eqnarray}
  n_{1,t} &=& n_{1}(r_{1}-ap)+ \frac{1}{\e}(m_{2}n_{2}-m_{1}n_{1}),\nn  \\
  n_{2,t} &=& n_{2}r_{2} +\frac{1}{\e}(m_{1}n_{1}-m_{2}n_{2}),  \nn\\
  p_{,t} &=& p(bn_{1}-d),
\label{eq1.1}
\end{eqnarray}
where, for $i=1,2,$ $n_{i}$ denotes the prey density in patch $i$,
$m_{i}$ denotes the migration rate from patch $i,$
 $r_{i}$ is the prey population growth
rate in patch $i$, $d$ is the predator death rate, $a$
is the predation rate in patch $1$ and $b$ is the biomass conversion rate.

We note that \eqref{eq1.1} is not in the typical Tikhonov form as letting $\e=0$ in the first two equations yields two identical equation and the assumptions of the Tikhonov theorem are not satisfied. However, adding the first two equation  and introducing the total prey population $n:=n_1+n_2,$
we obtain
\begin{eqnarray}
{n}_{,t} &=&  n(r_1-ap) + n_2(r_2-r_1-ap),\nn\\
\e n_{2,t}&=& \e n_2r_2+  (m_1 n -n_2(m_1+m_2)),\nn\\
{p}_{,t} &=& p(bn -bn_2 -d).
\label{LVsp}
\end{eqnarray}
Denoting $
M_i = \frac{m_i}{m_1+m_2},\quad i=1,2,$
we get the quasi steady state $\bar n_2 = M_1 \bar n$ and the reduced system
\begin{align}
{\bar n}_{,t} &=  \bar n(\bar r -aM_1\bar p),\nn\\
\bar p_{,t}&= (bM_2 \bar n - d),\label{LV1}
\end{align}
which we recognize as the Lotka-Volterra model with the aggregated birth rate for the prey $\bar r= M_2r_1 +M_1r_2$ and similarly adjusted predation and biomass conversion rates.  We see that the assumptions of the Tikhonov theorem are satisfied as
the quasi steady state $ \bar n_2 = M_1 \bar n$ is a uniformly asymptotically stable equilibrium of the initial layer equation
$$
 \ti n_{2,\tau}= m_1 n -\ti n_2(m_1+m_2).
$$
Thus the solution $(\bar n, \bar p)$ (augmented by the initial layer) approximates the solution $(n_1+n_2, p)$ of (\ref{eq1.1}) on finite time intervals. On the other hand, the equilibria of \eqref{eq1.1} are
 $(0,0,0)$  and, for small $\e$,
 $$
 (n^*_1, n^*_2, p^*) = \left(\frac{d}{b}, \frac{m_1 d}{b(m_2-\e r_2)},  \frac{r_1}{a} + \frac{m_1r_2}{a(m_2-\e r_2)}\right)
 $$
The Jacobi matrix of (\ref{eq1.1}), evaluated at $(n_1^*,n_2^*, p^*)$, gives
$$
\mc J^* = \e^{-1}\left(\begin{array}{ccc}-\frac{m_1m_2}{(m_2-\e r_2)}&m_2&-\frac{\e ad}{b}\\
m_1&\e r_2-m_2&0\\
\e b\left(\frac{r_1}{a} + \frac{m_1r_2}{a(m_2-\e r_2)}\right)&0&0\end{array}\right).
$$
Denoting $\alpha = {m_1m_2}/{(m_2-\e r_2)}$, $\beta = ad/b,$  $\gamma = bp^*$ and using $\alpha m_2-m_1m_2 =  \e\alpha r_2$, we get the characteristic equation
$$
\la^3 +\la^2(\alpha +m_2-\e r_2)+\la \e^2\beta\gamma +\e^2\beta\gamma (m_2-\e r_2)=0.
$$
For small $\e>0$ all coefficients are positive and hence e.g. the Hurwitz criterion
ensures that real parts of all eigenvalues of $\mc J^*$ are negative. Thus the positive equilibrium of the system (\ref{eq1.1}) is asymptotically stable.
\index{asymptotically stable solution} This is in contrast to (\ref{LV1}), for which the positive equilibrium is a centre.

On Fig. \ref{fig:8}, where we use  $m_1=2, m_2=1, r_1=1, r_2=2, a=1, d=1, b=0.9$ and $\e = 0.05$, we see that the approximation is initially good but loses accuracy for larger times.
\begin{figure}
\begin{center}
  \includegraphics[width=5cm,height=7cm,angle=270,bb=50 50 554 770,clip=true,trim=2pt 0pt 0pt 0pt]{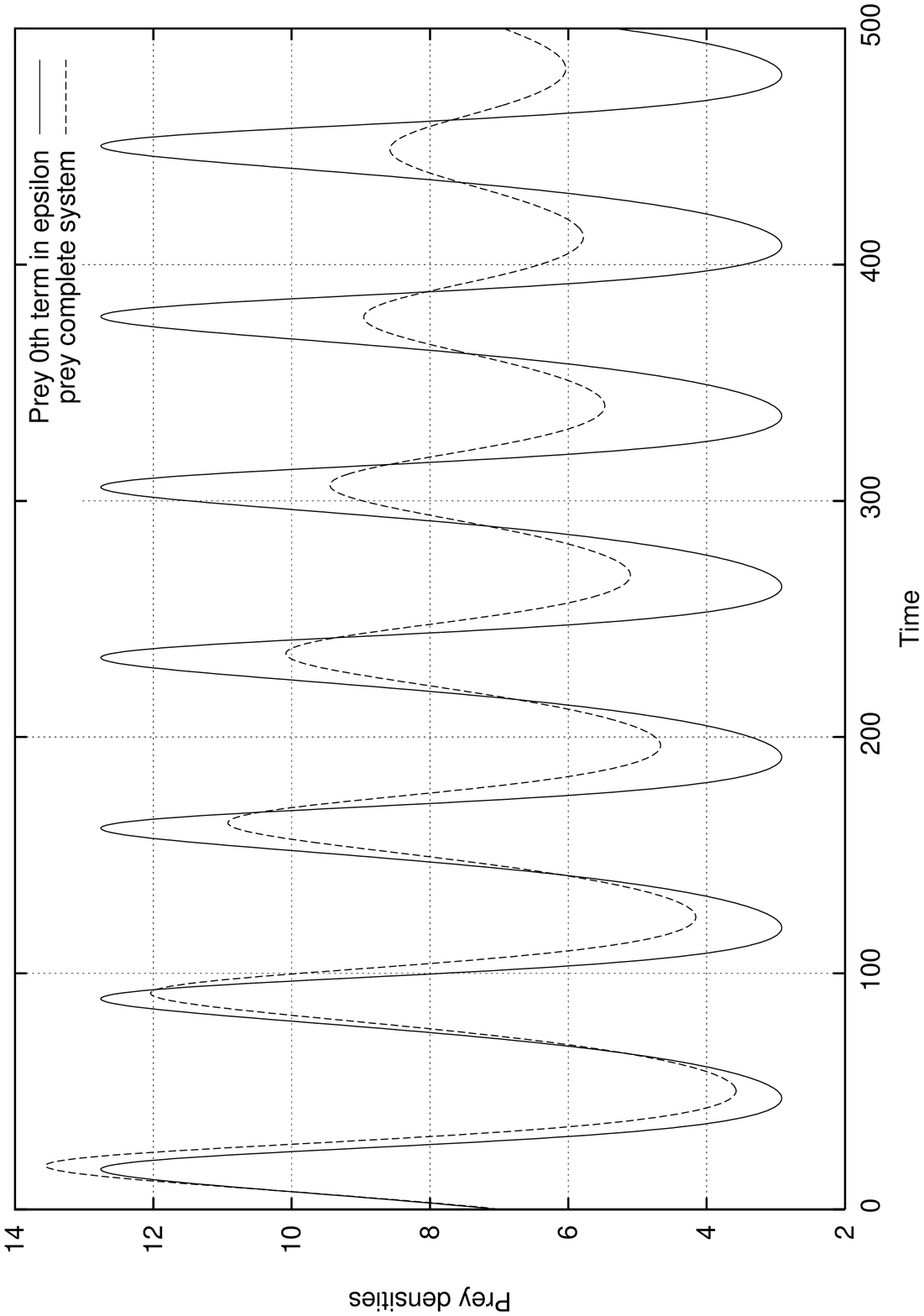}\\
  \includegraphics[width=5cm,height=7cm,angle=270,bb=50 50 554 770,clip=true,trim=2pt 0pt 0pt 0pt]{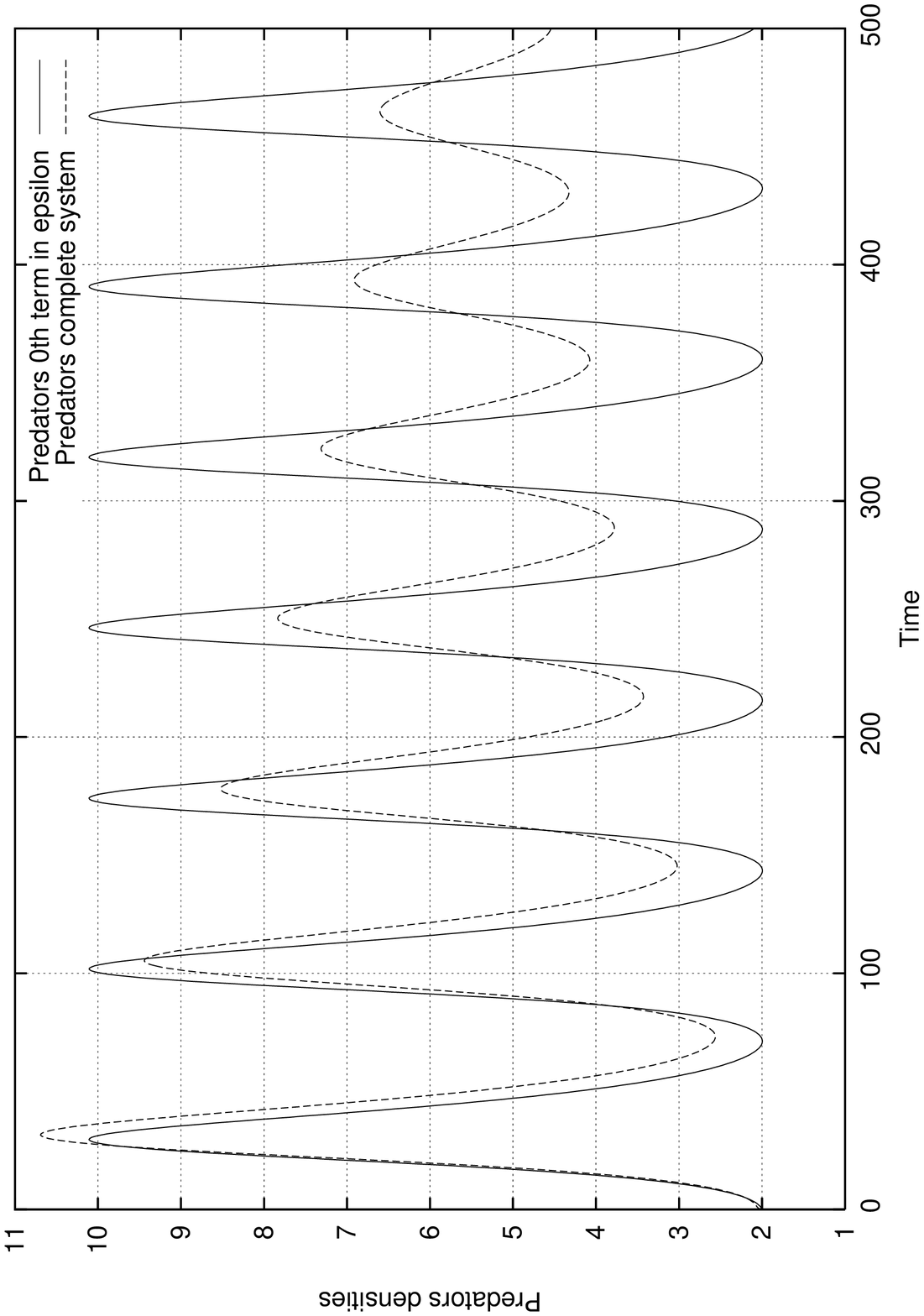}
 \end{center}
  \caption{Comparisons of the  prey $n$ (top) and predator $p$  (bottom) populations given by (\ref{eq1.1}) (dashed line) with the approximating  populations given by (\ref{LV1}) (solid line).}
\label{fig:8}
\end{figure}

\end{example}
\section{Notation and assumptions}

As mentioned in the introduction, we consider an $n\times m$ dimensional system
\begin{align}
\mb u_{\e,t}&={\mb f}(\mb u_\e,\mb v_\e,t,\e), \quad \mb u_\e(0) =\mr {\mb u},\nn\\
\e \mb v_{\e,t} &={\mb g}(\mb u_\e,\mb v_\e,t,\e),\quad \mb v_\e(0)= \mr{\mb  v}.
\label{CE1}
\end{align}
To be consistent with writing systems of equations in the column form, we adopt the convention that any vector $\mb x$ is a column vector and thus for a function $t\mapsto \mb x(t)$, $\mb x_{,t}$ is a column vector. Then, for a scalar function $\mb x \mapsto h(\mb x),$  $h_{,\mb x}$ is the row vector of the first derivatives of $h$ and,  for an $\mbb R^m$ valued  vector function  $\mbb R^n\ni \mb x \mapsto \mb h(\mb x) = (h_1(\mb x),\ldots, h_m(\mb x)),$  $\mb h_{,\mb x}$  the matrix having $h_{i,\mb x}, i=1,\ldots, m,$ as its rows, or
\begin{equation}
\mb h_{,\mb x} = (h_{i,x_j})_{1\leq i\leq m, 1\leq j\leq n}.
\label{conv}
\end{equation}
The estimates for the  nonlinear problems depend to large extent on the estimates for their time dependent linearizations. For this we recall some relevant results. Consider an $r\times r$ system  on $R_+$,
\begin{align}
\mb x_{,t}(t) &= A(t)\mb x(t),\quad \mb x(t_0) =\mr{\mb x},
\label{LS1}
\end{align}
where $A$ is a continuous matrix function, and let $\mbb R_+\ni t\to Y_A(t)$ be the fundamental matrix for \eqref{LS1}. We say that $A$ satisfies the exponential dichotomy property if
there are positive constants $K,\alpha$ such that
\begin{equation}
\|Y_A(t)Y^{-1}_A(s)\|\leq Ke^{-\alpha(t-s)}, \quad t_0\leq s\leq t<\infty.
\label{expdich1}
\end{equation}
We note that this is a simplified case of the exponential dichotomy discussed in e.g. \cite[Section 2.1]{Lin}. It is equivalent to the uniform exponential stability of \eqref{LS1}, see \cite[Theorem III.1]{Cop}. By \cite[Corollary (I) to Theorem III.9]{Cop} and \cite[Theorem 2.6]{Lin}, this property is stable under small, or vanishing at infinity, continuous perturbations.

Let us introduce assumptions on \eqref{CE1}.

\noindent
(A1) We assume that $\mb f:\mbb R^{n+m}\times \mbb R_+\times I_e\mapsto \mbb R^n$ and $\mb g:\mbb R^{n+m}\times \mbb R_+\times I_{e}\mapsto \mbb R^m,$ where $I = [0,e], e>0$, are $C^3$ functions with respect to $\mb u$ and $\mb v$ and $C^1$ with respect to $t$ and $\e,$ that are bounded together with all existing derivatives on $[0,\infty)$ uniformly for $\mb u,\mb v$ in bounded subsets of $\mbb R^{n+m}$ and $\e \in I_a$.

As in the classical Tikhonov theorem, we assume that \\
(A2)
\begin{equation}
0= {\mb g}({\mb u},{\mb v},t,0)
\label{degeq1}
\end{equation}
admits an isolated solution ${\mb v}(t) = {\mb \phi}({\mb u},t)$ for any $(\mb u, t)\in \mbb R^n\times [0,\infty)$ and we denote by $\bar {\mb u}$ the solution to
\begin{equation}
{\mb u}_{,t}={\mb f}({\mb u},{\mb \phi}({\mb u},t),t,0), \quad {\mb u}(0) = \mr {\mb u}.
\label{deqeq2}
\end{equation}
We assume that
\begin{equation}
t\mapsto \bar{\mb \upsilon}(t):=(\bar{\mb u}(t), \mb{\phi}(\bar{\mb u}(t),t))
\label{up1}
\end{equation}
 is a bounded differentiable function on $[0,\infty)$.

For any matrix $A$ we denote by $\sigma(A)$ the spectrum of $A$ and by $s(A) := \max\{\Re \la;\; \la\in \sigma(A)\}$ the spectral bound of $A$.

\noindent
(A3) For the  matrix $\mb g_{,\mb v}$ evaluated along $\bar{\mb \upsilon}(t)$ we assume that
\begin{equation}
\sup\limits_{t\in [0,\infty)} s({\mb g}_{,\mb v}(\ups(t),0))=: -\kappa'<0.
\label{mug}
\end{equation}
\begin{remark} Thanks to the assumption that ${\mb g}$ is a $C^3$ function with respect to $(\mb u, \mb v)$ and to the continuity of eigenvalues, see e.g. \cite[Section 3.1]{Ort}, there is $\delta>0,$ $\kappa>0$ and $\e_0>0$ such that \begin{equation}
\sup\limits_{(\mb u,\mb v)\in T_{\delta}, \e \in I_{\e_0} }s({\mb g}_{,\mb v}(\mb u, \mb v ,t,\e))=: -\kappa<0,
\label{tubeest}
\end{equation}
where
\begin{equation}
T_{\delta} = \bigcup\limits_{0\leq t<\infty}E_{\delta}(\ups(t)),
\label{tubedef}
\end{equation}
and
\begin{align*}
E_{\delta} (\ups( t)) &:= E_{\delta} (\bar {\mb u}(t), t)\times  E_{\delta} (\phi(\bar{\mb u}(t), t):= \{(\mb u,\mb v);\;\|\mb u-\bar {\mb u}(t)\|\leq \delta, \|\mb v-\mb \phi(\bar {\mb u}(t),t)\|\leq \delta\}.
\end{align*}
\end{remark}
Next, consider the auxiliary equation \begin{align}
\hat {\mb v}_{0,\tau}(\tau) &= {\mb g}(\mathring {\mb u},  \hat {\mb v}_0(\tau),0,0),\quad \hat {\mb v}_0(0)= \mathring {\mb v},\label{hav0}
\end{align}
(A4) We assume that $\mr{\mb v}$ belongs to the basin of attraction of the root $\mb \phi(\mr{\mb u},0)$; that is,
the solution $\hat{\mb v}_0$ of \eqref{hav0} satisfies
\begin{equation}
\lim\limits_{\tau\to \infty} \hat{\mb v}_0(\tau)= \mb \phi(\mr{\mb u},0).
\label{A5}
\end{equation}
\begin{remark}\label{rem2}
As in the classical Tikhonov theorem, it follows then that, under \eqref{mug}, there is a constant $C>0$ such that
\begin{equation}
\|\hat{\mb v}_0(\tau)- \mb \phi(\mr{\mb u},0)\| \leq Ce^{-\kappa \tau}, \quad \tau\geq 0.
\label{ilest1}
\end{equation}
Indeed, defining
\begin{equation}
\ti{\mb v}_0(\tau)=  \hat{\mb v}_0(\tau) -\mb\phi(\mb{\mr u}, 0),
 \label{hatv}
 \end{equation}
we have
\begin{align}
\ti {\mb v}_{0,\tau}(\tau) &= {\mb g}(\mathring {\mb u}, {\mb \phi}(\mathring {\mb u},0)+ \ti {\mb v}_0(\tau),0,0),\quad
\ti {\mb v}_0(0)= \mathring {\mb v} -{\mb \phi}(\mathring {\mb u},0)\label{tiv0}
\end{align}
and, linearizing, for some $0\leq \theta\leq 1$,
\begin{align*}
\ti{\mb v}_{0,\tau}(\tau) &= \mb g_{,\mb v} (\mr{\mb u}, {\mb \phi}(\mathring {\mb u},0)+ \theta (\ti {\mb v}_0(\tau)-{\mb \phi}(\mathring {\mb u},0)),0,0)\ti{\mb v}_0(\tau), \\
\ti {\mb v}_0(0)&= \mathring {\mb v} -{\mb \phi}(\mathring {\mb u},0).
\end{align*}
 Now, by \eqref{mug}, $s(\mb g_{,\mb v} (\mr{\mb u}, {\mb \phi}(\mathring {\mb u},0),0,0)) \leq -\kappa',$ so that
$\mb g_{,\mb v} (\mr{\mb u}, {\mb \phi}(\mathring {\mb u},0),0,0)$ satisfies the exponential dichotomy, as a constant matrix. Then, using     \eqref{A5} and \cite[Theorem 2.6]{Lin},  we see that there is $\tau_0$ such that $\mb g_{,\mb v} (\mr{\mb u}, {\mb \phi}(\mathring {\mb u},0)+ \theta (\ti {\mb v}_0(\tau)-{\mb \phi}(\mathring {\mb u},0)),0,0)$ satisfies the exponential dichotomy for $\tau\geq \tau_0$ (we can use the constant $\kappa$ from \eqref{tubeest}). Thus
\begin{equation}
\|\ti{\mb v}_0(\tau)\|\leq c_1\|\ti{\mb v}_0(\tau_0)\|e^{-\kappa(\tau-\tau_0)} \leq Ce^{-\kappa\tau}
\label{19}
\end{equation}
for some constants $c_1,C$.
\end{remark}
Finally, we adopt assumption that will ensure that the estimates are valid uniformly on $\mbb R_+$.

\noindent (A5) We assume that the matrix
\begin{equation}
\mc J_{\mb f}(\bar{\mb u}(t)):=\mb f_{,\mb u}(\ups(t),0) + \mb f_{,\mb v}(\ups(t),0){\mb\phi}_{,\mb u}(\bar{\mb u}(t),t), \quad t\in [0,\infty)
\label{expdich2}
\end{equation}
has the exponential dichotomy property \eqref{expdich1} with constants, say, $K_1, \alpha_1$.
\begin{remark} \label{rem3}
Direct verification of \eqref{expdich2} is usually quite difficult. It simplifies, however, if \eqref{CE1} is autonomous. Then $\mb \phi$ is independent of time and thus \eqref{deqeq2} is also autonomous. If $\bar{\mb u}(t) \to \bar{\mb u}^*$ as $t\to \infty$, then $\bar{\mb u}^*$ is an equilibrium of \eqref{deqeq2}. Denote $\bar{\mb\upsilon}^* = (\bar{\mb u}^*, \mb{\phi}(\bar{\mb u}^*))$. If the real parts of all eigenvalues of the Jacobian
$\mc J_{\mb f}(\bar{\mb u}^*) = \mb f_{,\mb u}(\bar{\mb\upsilon}^*,0) + \mb f_{,\mb v}(\bar{\mb\upsilon}^*,0){\mb\phi}_{,\mb u}(\bar{\mb u}^*)$ are negative, then $\mc J_{\mb f}(\bar{\mb u}^*),$ being a constant matrix, has the exponential dichotomy property. Then, arguing as in Remark \ref{rem2},    $\mc J_{\mb f}(\bar{\mb u}(t))$ also has the exponential dichotomy property.
\end{remark}
\section{Error estimates}

As in the classical proof of the Tikhonov theorem, the estimates are split into estimates in the initial layer and in the bulk part. The first part is done as in \cite[Theorem 2.3]{VasBut} or \cite[Theorem 3.3.1]{BanLach}. In the second part we borrow some ideas from \cite{MarCz} but simplify and extend them.
\subsection{Estimates in the initial layer}
Let $(\mb u_\e, \mb v_\e)$ be the solution to \eqref{CE1}.
\begin{lemma} For  any $\rho>0$ there is $\tau_\rho$ and $\e_\rho>0$ such that for any $0<\e<\e_\rho$ and $t_\rho = \e\tau_\rho$ we have
\begin{subequations}\label{est1}
\begin{align}
\|\mb u_\e(t_\rho) - \bar{\mb u}(t_\rho)\|&\leq \rho, \label {uest1}\\
\|\mb v_\e(t_\rho) - \mb \phi(\bar{\mb u}(t_\rho),t_\rho)\|&\leq \rho.\label{vest1}
\end{align}
\end{subequations}
 \label{lemest1}
 \end{lemma}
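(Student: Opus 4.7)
The plan is to work in the fast time variable $\tau=t/\e$ and exploit the exponential approach of the initial layer solution $\hat{\mb v}_0$ from Remark \ref{rem2}, together with continuous dependence of ODE solutions on the parameter $\e$ on a fixed compact interval in $\tau$. Specifically, rescaling \eqref{CE1} gives
\begin{align*}
\mb u_{\e,\tau} &= \e\,\mb f(\mb u_\e,\mb v_\e,\e\tau,\e), & \mb u_\e(0) &= \mr{\mb u},\\
\mb v_{\e,\tau} &= \mb g(\mb u_\e,\mb v_\e,\e\tau,\e), & \mb v_\e(0) &= \mr{\mb v},
\end{align*}
whose formal limit at $\e=0$ is the pair $\mb u_0(\tau)\equiv \mr{\mb u}$ together with the initial layer equation \eqref{hav0} solved by $\hat{\mb v}_0$.

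First, I would use \eqref{ilest1} from Remark \ref{rem2} to pick $\tau_\rho$ so large that $\|\hat{\mb v}_0(\tau_\rho)-\mb\phi(\mr{\mb u},0)\|\leq \rho/4$. Then, on the compact fast-time interval $[0,\tau_\rho]$, I would invoke the classical theorem on continuous dependence of ODE solutions on parameters (valid since $\mb f,\mb g$ are $C^1$ in all arguments by (A1) and the limiting trajectory $(\mr{\mb u},\hat{\mb v}_0(\tau))$ stays in a bounded set). This yields $\e_\rho>0$ such that for $0<\e<\e_\rho$ and every $\tau\in[0,\tau_\rho]$,
\begin{equation*}
\|\mb u_\e(\e\tau)-\mr{\mb u}\|\leq \rho/2,\qquad \|\mb v_\e(\e\tau)-\hat{\mb v}_0(\tau)\|\leq \rho/4.
\end{equation*}

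Next, since $\bar{\mb u}$ solves \eqref{deqeq2} with $\bar{\mb u}(0)=\mr{\mb u}$ and is differentiable, and since $\mb\phi$ is continuous (by the implicit function theorem applied to \eqref{degeq1} under (A3)), by shrinking $\e_\rho$ if necessary I can also assume that for $0<\e<\e_\rho$,
\begin{equation*}
\|\bar{\mb u}(t_\rho)-\mr{\mb u}\|\leq \rho/2,\qquad \|\mb\phi(\bar{\mb u}(t_\rho),t_\rho)-\mb\phi(\mr{\mb u},0)\|\leq \rho/4,
\end{equation*}
where $t_\rho=\e\tau_\rho$.

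Combining these estimates via the triangle inequality
\begin{align*}
\|\mb u_\e(t_\rho)-\bar{\mb u}(t_\rho)\| &\leq \|\mb u_\e(t_\rho)-\mr{\mb u}\|+\|\mr{\mb u}-\bar{\mb u}(t_\rho)\|,\\
\|\mb v_\e(t_\rho)-\mb\phi(\bar{\mb u}(t_\rho),t_\rho)\| &\leq \|\mb v_\e(t_\rho)-\hat{\mb v}_0(\tau_\rho)\|+\|\hat{\mb v}_0(\tau_\rho)-\mb\phi(\mr{\mb u},0)\| \\
&\quad +\|\mb\phi(\mr{\mb u},0)-\mb\phi(\bar{\mb u}(t_\rho),t_\rho)\|,
\end{align*}
yields both \eqref{uest1} and \eqref{vest1}. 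The main delicate point is the uniform-in-$\e$ approximation $\mb v_\e(\e\tau)\to\hat{\mb v}_0(\tau)$ on the fixed interval $[0,\tau_\rho]$: one must check that the standard Gronwall-type argument applies despite $\tau_\rho$ having been chosen large (it does, because the interval is fixed once $\rho$ is fixed, so $\e_\rho$ can then be chosen accordingly), and that the trajectories remain in a bounded set where the bounds of (A1) on the derivatives of $\mb f,\mb g$ provide the required Lipschitz constants.
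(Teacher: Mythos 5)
Your proposal is correct and follows essentially the same route as the paper: rescale to the fast time $\tau=t/\e$, treat the system as a regular perturbation of the layer equation \eqref{hav0} to get uniform closeness of $(\mb u_\e,\mb v_\e)$ to $(\mr{\mb u},\hat{\mb v}_0)$ on the fixed compact interval $[0,\tau_\rho]$ (with $\tau_\rho$ chosen first from \eqref{A5}), and then conclude by the triangle inequality using the continuity of $\bar{\mb u}$ and $\mb\phi$ as $t_\rho=\e\tau_\rho\to 0$. The only differences are cosmetic (the splitting constants and the intermediate point used in the triangle inequality for $\mb v_\e$), and you correctly identify and resolve the key quantifier-order issue of fixing $\tau_\rho$ before shrinking $\e_\rho$.
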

 \begin{proof}
 Consider the auxiliary function $\hat{\mb v}_0$ defined by \eqref{hav0}.  Then, by \eqref{A5}, for any $\rho>0$ there is $\tau_\rho$ such that
 \begin{equation}
 \|\hat{\mb v}_0(\tau)-\mb\phi(\mb{\mr u}, 0)\|\leq \frac{\rho}{6}
 \label{es1}
 \end{equation}
 for any $\tau\geq \tau_\rho$.  With the change of the variables $\hat{\mb u}_\e(\tau) = \mb u_\e(t),\hat{\mb v}_\e(\tau) = \mb v_\e(t),$ where $t=\e \tau,$  \eqref{CE1} becomes
 \begin{align}
 \hat{\mb u}_{\e,\tau} &=  \e{\mb f}(\hat{\mb u}_\e,\hat{\mb v}_\e,\e\tau,\e), \quad \hat {\mb u}_\e(0) = \mr {\mb u},\nn\\
 \hat{\mb v}_{\e,\tau} &=  {\mb g}(\hat{\mb u}_\e,\hat{\mb v}_\e,\e\tau,\e), \quad \hat {\mb v}_\e(0) = \mr {\mb v},
\label{modsystau}
\end{align}
which is a regular perturbation of
\begin{align}
 \hat{\mb u}_\tau &=  0, \quad \hat {\mb u}(0) = \mr {\mb u},\nn\\
 \hat{\mb v}_\tau &=  {\mb g}(\hat{\mb u},\hat{\mb v},0,0), \quad \hat {\mb v}(0) = \mr {\mb v}.
\label{modsystau1}
\end{align}
Note that the solution of the last equation is $\hat{\mb v}_0(\tau)$. Thus there is $\e_\rho$ such that for any $0<\e<\e_\rho$ and $\tau \in [0,\tau_\rho]$ we have
\begin{equation}
\|\hat{\mb u}_\e(\tau)- \mr {\mb u}\|\leq \frac{\rho}{6}, \quad \|\hat{\mb v}_\e(\tau)- \hat {\mb v}_0(\tau)\|\leq \frac{\rho}{6},
\label{rp1stest}
\end{equation}
or, returning to the original variable,
\begin{subequations}\label{initest1}
\begin{align}
\|{\mb u}_\e(t)- \mr {\mb u}\|&\leq \frac{\rho}{6},\label{initest1a}\\
 \left\|\hat{\mb v}_\e(t)- \hat {\mb v}_0\left(\frac{t}{\e}\right)\right\|&\leq \frac{\rho}{6},\label{initest1b}\end{align}
\end{subequations}
uniformly for $t\in [0,\e\tau_\rho].$ Using  \eqref{initest1a} and the continuity of $\mb \phi$, we may take $\e_0$ small enough to ensure that
\begin{equation}
\|\mb\phi({\mb u}_\e(t),t)-  \mb\phi(\mr{\mb u},0)\|\leq \frac{\rho}{6}
\label{initest2}
\end{equation}
on $[0,\e\tau_\rho]$. Hence, for $t_\rho := \e\tau_\rho,$
\begin{align}
\|\mb v_\e(t_\rho) - \mb\phi({\mb u}_\e(t_\rho),t_\rho)\| &\leq \left\|{\mb v}_\e(t_\rho)- \hat {\mb v}_0\left(\tau_\rho\right)\right\|+\|\hat {\mb v}_0\left(\tau_\rho\right)-\mb\phi(\mb{\mr u},0)\|\nn\\
&\phantom{xxx}+\|\mb\phi(\mb{\mr u},0)-\mb\phi({\mb u}_\e(t_\rho),t_\rho)\|\leq \frac{\rho}{2}.
\label{initest3}
\end{align}
Next,  since $\bar{\mb u}$ clearly is a continuous function, for sufficiently small $\e$ we have  $$\|\bar{\mb u}(t_\rho)- \mr {\mb u}\|\leq \frac{5\rho}{6}$$ and hence, by \eqref{initest1a},
\begin{equation}
\|{\mb u}_\e(t_\rho)- \bar {\mb u}(t_\rho)\|\leq \|{\mb u}_\e(t)- \mr {\mb u}\|+ \|\bar{\mb u}(t)- \mr {\mb u}\| \leq {\rho}.
\label{initest4}
\end{equation}
Using again \eqref{initest1a} and the continuity of $\mb \phi$, for sufficiently small $\e$ we have
\begin{equation}
\|\mb\phi({\mb u}_\e(t),t)-  \mb\phi(\bar{\mb u}(t),t)\|\leq \frac{\rho}{2}
\label{initest5}
\end{equation}
on $[0,\e\tau_\rho]$ and hence, by \eqref{initest3},
\begin{align}
\|\mb v_\e(t_\rho) - \mb\phi(\bar{\mb u}(t_\rho),t_\rho)\| &\leq \|\mb v_\e(t_\rho) - \mb\phi({\mb u}_\e(t_\rho),t_\rho)\| + \|  \mb\phi(\bar{\mb u}(t_\rho),t_\rho)- \mb\phi({\mb u}_\e(t_\rho),t_\rho)\|\nn\\
&\leq \rho.
\label{initest6}
\end{align}
 \end{proof}
 \begin{corollary}
 For any $\rho>0$ there are $\e_\rho$ and $\tau_\rho $, such that
 \begin{equation}
 \left\|\mb v_\e(t) - \mb \phi(\bar{\mb u}(t),t)- \ti{\mb v}_0\left(\frac{t}{\e}\right)\right\|\leq \rho\label{vest3}
 \end{equation}
 for $t \in [0,\e\tau_\rho]$ and $\e <\e_\rho$. \end{corollary}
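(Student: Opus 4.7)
The plan is to reduce the desired bound to two pieces that have already been controlled (or can be trivially controlled) inside the proof of Lemma~\ref{lemest1}. Using the definition \eqref{hatv} of $\ti{\mb v}_0$, namely $\ti{\mb v}_0(\tau) = \hat{\mb v}_0(\tau)-\mb\phi(\mr{\mb u},0)$, I would first rewrite
\begin{align*}
\mb v_\e(t)-\mb\phi(\bar{\mb u}(t),t)-\ti{\mb v}_0\!\left(\tfrac{t}{\e}\right)
=\Bigl[\mb v_\e(t)-\hat{\mb v}_0\!\left(\tfrac{t}{\e}\right)\Bigr]
+\Bigl[\mb\phi(\mr{\mb u},0)-\mb\phi(\bar{\mb u}(t),t)\Bigr],
\end{align*}
so that the triangle inequality splits the task into estimating the two bracketed vectors separately by $\rho/2$.

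For the first bracket I would invoke the regular-perturbation estimate \eqref{initest1b} obtained in the proof of Lemma~\ref{lemest1}: there exist $\tau_\rho$ and $\e'_\rho>0$ such that for every $0<\e<\e'_\rho$ one has $\|\mb v_\e(t)-\hat{\mb v}_0(t/\e)\|\leq \rho/2$ uniformly for $t\in[0,\e\tau_\rho]$. (The constant $\rho/6$ in \eqref{initest1b} is arbitrary, so the same argument with tolerance $\rho/2$ yields this estimate directly; alternatively one applies \eqref{initest1b} with $\rho$ replaced by $3\rho$.)

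For the second bracket I would use the continuity of $\mb\phi$ in both arguments together with the continuity of $\bar{\mb u}$ and the matching initial condition $\bar{\mb u}(0)=\mr{\mb u}$. Since $t_\rho:=\e\tau_\rho\to 0$ as $\e\to 0^+$ (with $\tau_\rho$ already fixed by the previous step), one can choose $\e''_\rho>0$ so small that for all $0<\e<\e''_\rho$ and all $t\in[0,\e\tau_\rho]$ one has $\|\mb\phi(\bar{\mb u}(t),t)-\mb\phi(\mr{\mb u},0)\|\leq \rho/2$. This is essentially the argument already used to obtain \eqref{initest2} and \eqref{initest5} in the lemma, applied with the reference point $(\mr{\mb u},0)$ rather than $(\mb u_\e(t),t)$.

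Setting $\e_\rho:=\min\{\e'_\rho,\e''_\rho\}$ and keeping the same $\tau_\rho$, the two $\rho/2$ bounds add via the triangle inequality to give the required estimate \eqref{vest3} for all $t\in[0,\e\tau_\rho]$ and $0<\e<\e_\rho$. There is no real obstacle here: the corollary is a bookkeeping rearrangement of the lemma, the only nontrivial ingredient being that $\e\tau_\rho$ shrinks as $\e\to 0$ so the continuity estimate for $\mb\phi\circ\bar{\mb u}$ becomes uniform on that shrinking interval.
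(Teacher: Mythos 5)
Your proposal is correct and follows essentially the same route as the paper: both use the definition \eqref{hatv} of $\ti{\mb v}_0$ to cancel $\hat{\mb v}_0$ against $\ti{\mb v}_0+\mb\phi(\mr{\mb u},0)$, then combine the regular-perturbation estimate \eqref{initest1b} with the continuity of $\mb\phi$ on the shrinking interval $[0,\e\tau_\rho]$ via the triangle inequality. The only cosmetic difference is that you compare $\mb\phi(\bar{\mb u}(t),t)$ directly to $\mb\phi(\mr{\mb u},0)$, whereas the paper passes through the intermediate point $\mb\phi(\mb u_\e(t),t)$ using \eqref{initest2} and \eqref{initest5}.
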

 \begin{proof} Using \eqref{hatv} and the fact that \eqref{initest1b} and \eqref{initest2} are valid on $[0,t_\rho] =[0, \e\tau_\rho]$, we obtain as in \eqref{initest3}
\begin{align}
\left\|\mb v_\e(t) - \mb\phi({\mb u}_\e(t),t)-\ti{\mb v}_0\left(\frac{t}{\e}\right)\right\| &\leq \left\|{\mb v}_\e(t)- \hat {\mb v}_0\left(\frac{t}{\e}\right)\right\|+\left\|\hat {\mb v}_0\left(\frac{t}{\e}\right)-\mb\phi(\mb{\mr u},0)-\ti{\mb v}_0\left(\frac{t}{\e}\right)\right\|\nn\\
&\phantom{x}+\|\mb\phi(\mb{\mr u},0)-\mb\phi({\mb u}_\e(t),t)\|\leq \frac{\rho}{3},
\label{initest3a}
\end{align}
as the middle term on the right hand side of the inequality is 0 by \eqref{hatv}.
 \end{proof}

\subsection{Large time estimates} By the results of the previous section, we see that we can focus on some neighbourhood of $\bar{\mb \upsilon}(t)$, $t\in [0,\infty)$.
Following some ideas from \cite{MarCz} (based on  \cite{Carr}), we localize the system around $\bar{\mb \upsilon}(t)$, using, however, a different localization that allows for a better control of the linear part of the problem.

Let $x\mapsto \psi(x)$ be a $C^\infty_0(\mbb R_+)$  function equal to 1 for $x \in\left[0,\frac{1}{4}\right)$  and 0 for $x\geq 1$. Then the functions
     $\psi_{\delta}(\mb u,t)= \psi(\delta^{-2}\|\mb u-\bar{\mb u}(t)\|^2)$ and $\chi_{\delta}(\mb v,t) =  \psi(\delta^{-2}\|\mb v-\phi(\bar{\mb u}(t),t)\|^2)$
     are  $C^\infty$ functions (we use the Euclidean norms) in $\mb u$ and $\mb v$, with $\psi_{\delta}(\mb u,t)$ equal to 1 on $E_{\delta/2} (\bar {\mb u}(t), t)$  and 0 outside
    $E_{\delta} (\bar {\mb u}(t), t)$ and  $\chi_{\delta}(\mb u,t)$ equal to 1 on
    $E_{\delta/2} (\phi(\bar{\mb u}(t), t)$ and 0 outside  $E_{\delta} (\phi(\bar{\mb u}(t), t).$ Then
    $$
    \Psi_{\delta}(\mb u,\mb v,t) := \psi_{\delta}(\mb u,t)\chi_{\delta}(\mb v,t)
   $$
     is a $C^\infty$ function of $\mb u$ and $\mb v$ that satisfies $\Psi_{\delta}^t(\mb u,\mb v) =1$ on $E_{\delta/2} (\ups(t))$ and $\Psi_{\delta}^t(\mb u,\mb v) =0$ on $E_{\delta} (\ups( t)).$ By construction,
 \begin{equation}\|\Psi_{\delta,\mb u}\|\leq C\delta^{-1}, \qquad \|\Psi_{\delta, \mb v}\|\leq C\delta^{-1}
 \label{Psiv}
 \end{equation} with a constant $C$ independent of $t$, see also \cite[Section 1.2]{Vlad1}.
 Furthermore,
 \begin{align*}
 \psi_{\delta,t}(\mb u,t) &= -\frac{2}{\delta^2}\phi_{,x}(\delta^{-2}\|\mb u-\bar{\mb u}(t)\|^2)(\mb u-\bar{\mb u}(t))\cdot \bar{\mb u}_{,t}(t)\\& = -\frac{2}{\delta^2}\phi_{,x}(\delta^{-2}\|\mb u-\bar{\mb u}(t)\|^2)(\mb u-\bar{\mb u}(t))\cdot {\mb f}(\ups(t),0),\\
  \chi_{\delta,t}(\mb v,t)&= -\frac{2}{\delta^2}\phi_{,x}(\delta^{-2}\|\mb v-\phi(\bar{\mb u}(t),t)\|^2)(\mb v-\mb\phi(\bar{\mb u}(t),t))\cdot [\mb \phi(\bar{\mb u}(t),t)]_{,t}
  \end{align*}
  and, due to
  \begin{align*}[\mb \phi(\bar{\mb u}(t),t)]_{,t}\ &= \mb\phi_{,\mb u}(\bar{\mb u}(t),t)\bar{\mb u}_{,t}(t) + \mb\phi_{,t}(\bar{\mb u}(t),t) \\&= - \mb g^{-1}_{,\mb v}(\ups(t),0)(\mb g_{,\mb u}(\ups(t),0)\mb f(\ups(t),0) + \mb g_{,t}(\ups(t),0)),
  \end{align*}
we see, by \eqref{mug}, the assumptions on $\ups(t)$ and on the derivatives of $\mb g$, that also
\begin{align*}
 \|\psi_{\delta,t}(\mb u,t) \| &\leq C\delta^{-1}, \quad   \|\chi_{\delta,t}(\mb v,t)\|\leq C\delta^{-1}
  \end{align*}
  independently of $t\in [0,\infty)$.
Then we define
\begin{align}
\breve {\mb g}({\mb u},{\mb v},t,\e) &= \psi_{\delta}(\mb u,t){\mb g}_{,\mb u}(\ups(t),0)({\mb u}-\bar {\mb u}(t))+ {\mb g}_{,\mb v}(\ups(t),0)({\mb v}-{\mb \phi}(\bar {\mb u}(t),t))\nn\\
& \phantom{x}+ \Psi_{\delta}({\mb u}, {\mb v},t)\mc H^\#_{\mb g}({\mb u},{\mb v}, \bar {\mb u}(t),{\mb \phi}(\bar {\mb u}(t),t), t,\e).\label{hatg}
\end{align}
Here  $\mc H^\#_{\mb g} = \mc H_{\mb g}+ \mc J_{\mb g},$ where $\mc H_{\mb g}$ is the second order reminder of the expansion of  ${\mb g}(\mb u,\mb v,t,0)$ with respect to $({\mb u},{\mb v})$ around $\ups(t)$ (see e.g. \cite[Proof of Theorem 4.17]{AUMCS}) and $\mc J_{\mb g}$ the first order reminder of the expansion of ${\mb g}(\mb u,\mb v,t,\e)$ in $\e$ around $\e =0$. In particular, by \textit{op. cit}, $\mc H_{\mb g}$ is of order of $\delta^2,$ while $\mc J_{\mb g}$ is of order of $\e$.

We show that there is $\breve\delta<\delta$ such that
\begin{equation}
\sup\limits_{(\mb u,\mb v)\in \mbb R^{n+m}, t\in [0,\infty)} s(\breve {\mb g}_{,\mb v}(\mb u, \mb v ,t,0))\leq - \breve\kappa<0.
\label{tubeest1`}
\end{equation}
 Indeed, for a given $\delta>0$
\begin{align*}
\breve{\mb g}_{,\mb v}(\mb u,\mb v, t,0)&= {\mb g}_{,\mb v}(\ups(t),0) + \Psi_{\delta,\mb v}({\mb u}, {\mb v},t)\mc H_{\mb g}({\mb u},{\mb v}, \bar {\mb u}(t),{\mb \phi}(\bar {\mb u}(t),t), t)\\
& \phantom{x}+ \Psi_{\delta}({\mb u}, {\mb v},t)[\mc H_{\mb g}({\mb u},{\mb v}, \bar {\mb u}(t),{\mb \phi}(\bar {\mb u}(t),t), t)]_{,\mb v}.
\end{align*}
Then, if $(\mb u, \mb v) \in I_{\delta/2}$, then $\breve{\mb g}(\mb u,\mb v, t,0)= {\mb g} (\mb u,\mb v, t,0)$ and  \eqref{tubeest1`} follows from \eqref{tubeest}, while if $(\mb u, \mb v) \notin I_{\delta}$ we have
$\breve{\mb g}_{,\mb v}(\mb u,\mb v, t,0)= {\mb g}_{,\mb v}(\ups(t),0)$ and \eqref{tubeest1`} is the assumption. Finally, if $(\mb u, \mb v) \in I_{\delta}\setminus I_{\delta/2}$, then, by
 \eqref{Psiv} and the properties of $\mc H_{\mb g}$,
\begin{align*}
&\|\Psi_{\delta,\mb v}({\mb u}, {\mb v},t)\mc H_{\mb g}({\mb u},{\mb v}, \bar {\mb u}(t),{\mb \phi}(\bar {\mb u}(t),t), t)+ \Psi_{\delta}({\mb u}, {\mb v},t)[\mc H_{\mb g}({\mb u},{\mb v}, \bar {\mb u}(t),{\mb \phi}(\bar {\mb u}(t),t), t)]_{,\mb v}\|\\
&\leq  \frac{C}{\delta}M_1\frac{\delta^2}{2} + M_2\delta^2 + M_3\delta,
\end{align*}
where $M_i, i=1,2,3,$ only depend on the suprema of, respectively, third and second order derivatives of $\mb g$ with respect to $\mb u$ and $\mb v$ in $I_\delta$. Hence we see that by taking a sufficiently small $\breve\delta <\delta$ (and the corresponding $\Psi_{\breve \delta}$) we obtain  \eqref{tubeest1`}. Thus $$
\breve{\mb g}({\mb u},{\mb v},t,0) = 0
$$
is solvable for any $t\geq 0$ with $\mb v(t) = \breve{\mb{\phi}}(\mb u,t)$ having a global Lipschitz constant $\breve L$.  Clearly,
\begin{equation}
\mb \phi(\bar{\mb u}(t),t)  = \breve{\mb \phi}(\bar{\mb u}(t),t).\label{brphi}\end{equation}
Also, by continuity, for some $\breve\e>0,$
\begin{equation}
\sup\limits_{(\mb u,\mb v)\in \mbb R^{n+m}, t\in [0,\infty), \e\in I_{\breve\e}} s(\breve {\mb g}_{,\mb v}(\mb u, \mb v ,t,\e))=: - \breve\kappa'<0.
\label{tubeest1`}
\end{equation}
To localize $\mb f$ we re-write it in the form more suited for further calculations, namely
$$
{\mb f}(\mb u,\mb v,t,\e) = \mb f(\bar{\mb u}(t) + \mb \zeta(t), \breve{\mb\phi}(\bar{\mb u}(t)+\mb \zeta(t), t) + \mb\eta(t), t,\e),
$$
where
$$
\mb \zeta(t) = \mb u-\bar{\mb u}(t), \quad \mb \eta(t) = \mb v(t)-\breve{\mb\phi}(\bar{\mb u}(t)+\mb \zeta(t), t).
$$
Then, using \eqref{conv},  we define
\begin{align}
\breve{\mb f}(\mb u,\mb v,t,\e) &= {\mb f}(\ups(t),0) + (\mb f_{,\mb u}(\ups(t),0) + \mb f_{,\mb v}(\ups(t),0){\mb\phi}_{,\mb u}(\bar{\mb u}(t),t))\mb \zeta(t)\nn\\ &\phantom{x}+\mb f_{,\mb v}(\ups(t),0)\mb \eta(t)+  \psi_{\delta}({\mb u},t)\mc H^\#_{\mb f}(\mb \zeta(t),\mb \eta(t),\bar{\mb u}(t),{\mb\phi}(\bar{\mb u}(t),t),t,\e)\nn\\
&={\mb f}(\ups(t),0) + (\mb f_{,\mb u}(\ups(t),0) + \mb f_{,\mb v}(\ups(t),0){\mb\phi}_{,\mb u}(\bar{\mb u}(t),t))\mb \zeta(t)\nn\\
& \phantom{x}+ \mb M_1\mb \eta(t) + \mb M_2(\mb\zeta(t),\mb\eta(t)) + \e\mb M_3(\mb\zeta(t),\mb\eta(t)),     \label{hatf}
\end{align}
where, as before, $\mc H^\#_{\mb f} =\mc H_{\mb f} +\mc J_\e,$ $\mc H_{\mb f}$ is the second order reminder with respect to $\mb \zeta$ and $\mb \eta$ evaluated at $\e=0$, while $\mc J_{\mb f}$ is the first order reminder with respect to $\e$. Thus, $\mb M_i, i=1,2,3,$  depend only on the derivatives of $\mb f,\mb g$ in $E_{\delta} (\ups( t))$ up to second order and are finite irrespectively of $\mb u,\mb v, \e,t$; in particular,  $\mb M_2 = O(\delta^2+\delta\|\mb\eta\| + \|\mb\eta\|^2)$.

 Then we consider the localized problem
\begin{align}
{\mb u}_{\e,t} &= \breve {\mb f}({\mb u}_\e,{\mb v}_\e,t,\e), \quad \breve {\mb u}(t_0) = \breve {\mb u}_0,\nn\\
\e {\mb v}_{\e,t} &=  \breve{\mb g}({\mb u}_\e,{\mb v}_\e,t,\e), \quad \breve {\mb v}(t_0) = \breve {\mb v}_0,
\label{modsys}
\end{align}
for some $t_0\geq 0$. We have the following lemma.
\begin{lemma}
\begin{description}
\item a) Let $(\breve{\mb u}_\e, \breve{\mb v}_\e)$ be the solution to \eqref{modsys}. Then,  $(\breve{\mb u}_\e(t), \breve{\mb v}_\e(t)) = ({\mb u}_\e(t), {\mb v}_\e(t))$ as long as $(\breve{\mb u}_\e(t), \breve{\mb v}_\e(t)) \in E_{\breve\delta/2}(\bar {\mb u}(t), \phi(\bar{\mb u}(t), t))$;
    \item b) Solutions to \eqref{modsys} are bounded uniformly with respect to $\e$ and $t$;
        \end{description}\label{lemma5}
    \end{lemma}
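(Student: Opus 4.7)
\noindent\textbf{Part (a).} The goal is to show that $\breve{\mb f}$ and $\breve{\mb g}$ coincide with $\mb f$ and $\mb g$ on the tube $E_{\breve\delta/2}(\ups(t))$ and then invoke uniqueness for the Cauchy problem. On this tube one has $\psi_{\breve\delta}(\mb u,t)=\chi_{\breve\delta}(\mb v,t)=1$ and hence $\Psi_{\breve\delta}(\mb u,\mb v,t)=1$. Substituting into \eqref{hatg} and using $\mb g(\ups(t),0)=0$ from (A2), the right-hand side reduces to the full first-order Taylor expansion of $\mb g(\cdot,\cdot,t,\e)$ around $(\ups(t),0)$ with exact remainders, and so equals $\mb g(\mb u,\mb v,t,\e)$. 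Since $\breve{\mb g}(\cdot,\cdot,t,0)=\mb g(\cdot,\cdot,t,0)$ on the tube, uniqueness of the root in (A2) gives $\breve{\mb\phi}(\mb u,t)=\mb\phi(\mb u,t)$ locally, so $\mb\eta=\mb v-\mb\phi(\mb u,t)$, and an analogous Taylor argument in \eqref{hatf} yields $\breve{\mb f}=\mb f$ on the same tube. Consequently $(\breve{\mb u}_\e,\breve{\mb v}_\e)$ and $(\mb u_\e,\mb v_\e)$ satisfy the same differential equation as long as their trajectory lies in $E_{\breve\delta/2}(\ups(t))$, and ODE uniqueness closes the argument.

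\noindent\textbf{Part (b).} I would estimate the fast variable first, exploiting the built-in stability of $\mb g_{,\mb v}(\ups(t),0)$, and then use the resulting bound in the slow equation. Setting $\mb w=\breve{\mb v}_\e-\mb\phi(\bar{\mb u}(t),t)$, equation \eqref{hatg} together with the uniform bound on $[\mb\phi(\bar{\mb u}(t),t)]_{,t}$ derived in the preamble gives
\[
\e\mb w_{,t}=\mb g_{,\mb v}(\ups(t),0)\,\mb w+G(\breve{\mb u}_\e,\breve{\mb v}_\e,t,\e),
\]
with $G$ uniformly bounded on $\mbb R^{n+m}\times[0,\infty)\times I_{\breve\e}$: the term $\psi_{\breve\delta}\mb g_{,\mb u}(\ups(t),0)(\breve{\mb u}_\e-\bar{\mb u}(t))$ is supported where $\|\mb u-\bar{\mb u}(t)\|\le\breve\delta$, $\Psi_{\breve\delta}\mc H_{\mb g}^\#$ is supported in $E_{\breve\delta}(\ups(t))$, and the coefficients are bounded by (A1) and boundedness of $\ups$. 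Uniform exponential stability of the homogeneous part, with rate $\kappa'/\e$ inherited from (A3) (argued as in Remark 2), combined with variation of constants then gives $\|\mb w(t)\|\le K\|\mb w(t_0)\|+K\sup\|G\|/\kappa'$, so $\breve{\mb v}_\e$ is bounded uniformly in $t$ and $\e$.

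With $\breve{\mb v}_\e$ controlled, set $\mb\zeta=\breve{\mb u}_\e-\bar{\mb u}(t)$. From $\bar{\mb u}_{,t}=\mb f(\ups(t),0)$ and \eqref{hatf},
\[
\mb\zeta_{,t}=\mc J_{\mb f}(\bar{\mb u}(t))\,\mb\zeta+H(\breve{\mb u}_\e,\breve{\mb v}_\e,t,\e),\qquad H:=\mb f_{,\mb v}(\ups(t),0)\mb\eta+\psi_{\breve\delta}\mc H_{\mb f}^\#.
\]
Boundedness of $\breve{\mb v}_\e$ together with the global Lipschitzness of $\breve{\mb\phi}$ makes $\mb\eta$ bounded; and $\psi_{\breve\delta}$ confines $\mb\zeta$ to a bounded set where, in view of the $\mb v$-bound and (A1), $\mc H_{\mb f}^\#$ is also bounded. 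Hence $H$ is uniformly bounded on the whole phase space. Assumption (A5) then supplies the uniform exponential dichotomy of $\mc J_{\mb f}(\bar{\mb u}(t))$ with constants $K_1,\alpha_1$, and a second variation-of-constants estimate yields $\|\mb\zeta(t)\|\le K_1\|\mb\zeta(t_0)\|+K_1\sup\|H\|/\alpha_1$. A standard bootstrap argument on the maximal existence interval rules out finite-time blow-up, so the solution is global and uniformly bounded.

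\noindent\textbf{The main obstacle} is upgrading the uniform spectral bound in (A3) to a genuine exponential-stability estimate for the non-autonomous linear system $\e\mb w_{,t}=\mb g_{,\mb v}(\ups(t),0)\mb w$, since for time-varying systems the spectral bound does not in general control the growth constant. One must either invoke a slow-variation/perturbation argument via \cite[Theorem~2.6]{Lin} in the spirit of Remark~3 (using convergence of $\ups(t)$ and the constant-coefficient dichotomy at a limit point), or supplement (A3) with a parallel dichotomy hypothesis for $\mb g_{,\mb v}(\ups(t),0)$; all other ingredients reduce to routine variation-of-constants and Gronwall estimates.
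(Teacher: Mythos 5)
Your outline follows the same route as the paper: part a) is the observation that all cut-off factors equal one on $E_{\breve\delta/2}(\ups(t))$, so the localized right-hand sides are exact Taylor expansions with exact remainders and coincide with $\mb f$ and $\mb g$ there (the paper simply calls this obvious); part b) is the same two-stage variation-of-constants argument, fast variable first via the frozen linear part $\mb g_{,\mb v}(\ups(t),0)$ with a globally bounded inhomogeneity, then the slow variable via the dichotomy of $\mc J_{\mb f}(\bar{\mb u}(t))$ from (A5).

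The one step you leave open is, however, the only genuinely nontrivial one, and neither of the two workarounds you offer closes it in the paper's generality. You correctly observe that (A3) is a pointwise spectral bound and that for a time-varying system $\e\mb w_{,t}=\mb g_{,\mb v}(\ups(t),0)\mb w$ this does not by itself yield a decay estimate for the fundamental matrix. But your first remedy (a perturbation off a constant-coefficient dichotomy at a limit point of $\ups(t)$, as in Remark 3) requires $\ups(t)$ to converge, whereas (A2) only assumes it is bounded and differentiable; and your second remedy (adding a dichotomy hypothesis for $\mb g_{,\mb v}(\ups(t),0)$) changes the assumptions of the theorem. The paper closes the gap with Lemma \ref{lemeigenest} in the Appendix (Vasil{\cprime}eva--Butuzov): for a \emph{singularly perturbed} linear system $\e Y_{,t}=D(t)Y$ with $D$ bounded and uniformly continuous and with uniform spectral bound $-2\sigma$, the fundamental matrix satisfies $\|Y(t,\e)Y^{-1}(s,\e)\|\leq c e^{-\sigma(t-s)/\e}$ for all small $\e$. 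The proof is a freezing-of-coefficients argument that exploits precisely the $1/\e$ factor you were worried about: on the fast time scale the coefficient matrix varies slowly, so the perturbation $D(q)-D(t)$ contributes only $O(\delta(\e)+e^{-\sigma/\e})$ to the Gronwall-type bound. This is why no convergence of $\ups(t)$ and no extra hypothesis are needed for the fast equation; the genuine dichotomy assumption is reserved for the slow equation, where there is no $1/\e$ to help, and that is exactly what (A5) supplies. With Lemma \ref{lemeigenest} in hand, your estimate $\|\mb w(t)\|\leq K\|\mb w(t_0)\|+K\sup\|G\|/\kappa'$ is the paper's \eqref{eta1}, and the rest of your argument, including the bootstrap ruling out finite-time blow-up, is sound.
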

    \begin{proof}
    a) The statement is obvious as all the cut-off functions are equal to one if $(\breve{\mb u}_\e(t), \breve{\mb v}_\e(t)) \in E_{\breve\delta/2}(\bar {\mb u}(t), \phi(\bar{\mb u}(t), t))$ and then \eqref{modsys} coincides with \eqref{CE1}.

    \noindent
    b) The equation for $\breve{\mb v}_\e$ can be written as
    $$
    \breve {\mb v}_{\e,t} = \frac{1}{\e}{\mb g}_{,\mb v}(\ups(t),0)\breve {\mb v}_\e +\frac{1}{\e}\mb h_\e(t) , \quad {\mb v}(0) = \mr {\mb v}
    $$
    where, by the definition of $\breve{\mb g}$,  $\|\mb h_\e(t)\|\leq M$ for some $M$ independent of $t$ and $\e$. Hence, by Lemma \ref{lemeigenest},
    \begin{align}
    \|\breve {\mb v}_{\e}(t)\|&\leq ce^{-\frac{\kappa'}{2\e}(t-t_0)}\| \breve {\mb v}_0\| + \frac{ce^{-\frac{\kappa'}{2\e}(t-t_0)}}{\e}\cl{t_0}{t}e^{\frac{\kappa'}{2\e}s}\|\mb h_\e(s)\|ds\nn\\&\leq ce^{-\frac{\kappa'}{2\e}t} \|\breve {\mb v}_0\| + cM{e^{-\frac{\kappa'}{2\e}t}}\cl{0}{t/\e}e^{\frac{\kappa'}{2}\sigma}d\sigma \nn\\
    &\leq ce^{-\frac{\kappa'}{2\e}(t-t_0)} \|\breve {\mb v}_0\| + \frac{2cM}{\kappa'}(1- e^{-\frac{\kappa'}{2\e}t}) \leq \|\breve {\mb v}_0\| + \frac{2cM}{\kappa'}=:\eta_\infty.\label{eta1}
    \end{align}
    Similarly,  by \eqref{hatf} and the definition of $\mb \zeta,$ we find that
    \begin{align}
    \mb \zeta_{\e,t} &= (\mb f_{,\mb u}(\ups(t),0) + \mb f_{,\mb v}(\ups(t),0){\mb\phi}_{,\mb u}(\bar{\mb u}(t),t))\mb \zeta_\e+ \mb M_1\mb \eta_\e(t) + \mb M_2(\mb\zeta_\e(t),\mb\eta_\e(t)), \nn\\
    &\phantom{x} + \e\mb M_3(\mb\zeta_\e(t),\mb\eta_\e(t)),\nn\\\mb \zeta(t_0)&=\breve{\mb \zeta}_0.
    \label{deltaeq1}
    \end{align}
     Thus, using assumption (A5), for some constants  $C_1,C_2,C_3$
    \begin{equation}
    \|\mb \zeta_\e(t)\| \leq K_1e^{-\alpha_1 ( t-t_0)}\|\breve{\mb \zeta}_0\| + C_1\eta_\infty + C_2(\delta^2+ \delta\eta_\infty +\eta^2_\infty)+C_3\e.
    \label{deltaest1}
    \end{equation}
    In particular, $\mb u_\e(t)$ is bounded uniformly in $t$ and $\e$.
        \end{proof}
\begin{lemma} There is $\rho_0$ such that for any $0<\rho<\rho_0$ there is $\e_\varrho>0$ and constants $c,C,C_4,C_5$ such that for any $\e\in (0,\e_\rho)$ we have
\begin{subequations}\label{ltest}
\begin{align}
\|\mb v_\e(t)- \mb{\phi}(\bar{\mb u}(t),t)\|&\leq c\rho +C\e,\label{ltest1}\\
 \|\mb u_\e(t)-\bar{\mb u}(t)\|&\leq C_4\rho + C_5\e, \label{ltest2}
\end{align}
\end{subequations}
for $t \in [\e\tau_\rho,\infty)$.
\end{lemma}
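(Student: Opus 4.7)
The plan is to work with the localized system \eqref{modsys} initialized at $t_0 := t_\rho = \e\tau_\rho$, taking $\breve{\mb u}_0 = \mb u_\e(t_\rho)$ and $\breve{\mb v}_0 = \mb v_\e(t_\rho)$. By Lemma \ref{lemest1}, \eqref{brphi} and the Lipschitz continuity of $\breve{\mb\phi}$, these initial data satisfy $\|\breve{\mb u}_0-\bar{\mb u}(t_\rho)\|\leq \rho$ and $\|\breve{\mb v}_0-\breve{\mb\phi}(\breve{\mb u}_0,t_\rho)\|\leq(1+\breve L)\rho$. Along the localized solution I introduce the error variables $\mb\zeta_\e(t) := \breve{\mb u}_\e(t)-\bar{\mb u}(t)$ and $\mb\eta_\e(t) := \breve{\mb v}_\e(t)-\breve{\mb\phi}(\breve{\mb u}_\e(t),t)$; the aim is to prove $\|\mb\zeta_\e(t)\|, \|\mb\eta_\e(t)\| = O(\rho+\e)$ uniformly for $t\geq t_\rho$, then transfer the estimates back to $(\mb u_\e,\mb v_\e)$ via Lemma \ref{lemma5}(a).

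Differentiating and using \eqref{hatf}, the equation for $\mb\zeta_\e$ becomes
\begin{align*}
\mb\zeta_{\e,t} = \mc J_{\mb f}(\bar{\mb u}(t))\mb\zeta_\e + \mb f_{,\mb v}(\ups(t),0)\mb\eta_\e + \mb M_2(\mb\zeta_\e,\mb\eta_\e) + \e\mb M_3(\mb\zeta_\e,\mb\eta_\e),
\end{align*}
while, using \eqref{hatg}, the identity $\breve{\mb g}(\mb u,\breve{\mb\phi}(\mb u,t),t,0)=0$, and the chain rule applied to $\breve{\mb\phi}(\breve{\mb u}_\e(t),t)$,
\begin{align*}
\e\mb\eta_{\e,t} = \breve{\mb g}_{,\mb v}(\ups(t),0)\mb\eta_\e + \mc R_2(\mb\zeta_\e,\mb\eta_\e,t,\e),
\end{align*}
where on the set where $\psi_\delta=1$ the remainder $\mb M_2$ and the nonlinear part of $\mc R_2$ are pointwise Taylor remainders of size $O(\|\mb\zeta_\e\|^2+\|\mb\zeta_\e\|\|\mb\eta_\e\|+\|\mb\eta_\e\|^2)$, and $\mc R_2$ carries an additional $O(\e)$ contribution from $\e\,d\breve{\mb\phi}(\breve{\mb u}_\e(t),t)/dt$ and from the Taylor expansion of $\breve{\mb g}$ in $\e$. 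Applying Duhamel with the dichotomy constants $(K_1,\alpha_1)$ from (A5) to the first equation, and the fast dichotomy at rate $\breve\kappa'/\e$ from \eqref{tubeest1`} (as in the proof of Lemma \ref{lemma5}(b)) to the second, yields
\begin{align*}
\|\mb\zeta_\e(t)\|&\leq K_1 e^{-\alpha_1(t-t_\rho)}\rho + K_1\int_{t_\rho}^t e^{-\alpha_1(t-s)}\bigl(c_1\|\mb\eta_\e(s)\|+\|\mc R_1(s)\|\bigr)ds,\\
\|\mb\eta_\e(t)\|&\leq c\,e^{-\breve\kappa'(t-t_\rho)/(2\e)}(1+\breve L)\rho + \frac{c}{\e}\int_{t_\rho}^t e^{-\breve\kappa'(t-s)/(2\e)}\|\mc R_2(s)\|ds.
\end{align*}
The key observation is that the prefactor $1/\e$ cancels the fast rate: $\frac{c}{\e}\int_{t_\rho}^t e^{-\breve\kappa'(t-s)/(2\e)}ds \leq 2c/\breve\kappa'$ independently of $\e$, so the $\mb\eta_\e$ bound reduces to $O(\rho)+O(\sup_s\|\mc R_2(s)\|)$.

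Setting $Z(t):=\sup_{s\in[t_\rho,t]}\|\mb\zeta_\e(s)\|$ and $H(t):=\sup_{s\in[t_\rho,t]}\|\mb\eta_\e(s)\|$, the two inequalities reduce to a coupled system of the form
\begin{align*}
Z(t)\leq a_1\rho + a_2 H(t) + a_3\bigl(Z(t)^2+H(t)^2\bigr)+a_4\e,\qquad H(t)\leq b_1\rho + b_2\bigl(Z(t)H(t)+H(t)^2\bigr)+b_3\e,
\end{align*}
valid uniformly in $t\geq t_\rho$. Choosing $\rho_0$ and $\e_\rho$ small enough that the quadratic terms can be absorbed into the left-hand sides, a standard bootstrap yields $H(t)\leq c\rho+C\e$ and $Z(t)\leq C_4\rho+C_5\e$. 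A further reduction of $\rho_0$ ensures these bounds stay below $\breve\delta/2$ so that, by Lemma \ref{lemma5}(a), $\breve{\mb u}_\e=\mb u_\e$ and $\breve{\mb v}_\e=\mb v_\e$ throughout $[t_\rho,\infty)$, and combining with $\breve{\mb\phi}(\bar{\mb u}(t),t)=\mb\phi(\bar{\mb u}(t),t)$ from \eqref{brphi} and the Lipschitz property of $\breve{\mb\phi}$ delivers \eqref{ltest}. The main obstacle I anticipate is the careful bookkeeping of $\mc R_2$: one has to verify that its $O(\e)$ part, after being divided by $\e$ inside Duhamel and convolved against the fast exponential, produces an $O(\e)$-size tail rather than an $O(1)$ one, and that the quadratic coupling between $Z$ and $H$ can be absorbed with constants independent of $t$ and $\e$ — which is precisely where the smallness of $\rho_0$ (but \emph{not} of $\e$) is crucial to closing the bootstrap globally in time.
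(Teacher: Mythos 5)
Your proposal is correct in outline and arrives at the same estimates, but it takes a genuinely different route from the paper's proof. You freeze the linearizations along $\ups(t)$ --- using $\mc J_{\mb f}(\bar{\mb u}(t))$ from (A5) for the slow equation and $\breve{\mb g}_{,\mb v}(\ups(t),0)$ for the fast one --- and push everything else into quadratic Taylor remainders, which forces a genuinely coupled system of integral inequalities for $Z$ and $H$ and a smallness/bootstrap argument to absorb the quadratic terms. The paper instead writes both differences in mean-value form, with the Jacobians evaluated at intermediate points $(\mb u^*,\mb v^*)$: for the fast variable the global spectral bound \eqref{tubeest1`} (valid for \emph{all} $(\mb u,\mb v)$ precisely because of the localization) together with Lemma \ref{lemeigenest} yields $\|\breve{\mb\eta}_\e\|\leq c\rho+C\e$ directly and \emph{independently of} $\mb\zeta_\e$; for the slow variable the roughness of exponential dichotomy (\cite[Theorem 2.6]{Lin}) transfers (A5) to the intermediate points, after checking the solution stays in the tube $I_{\delta'}$. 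This decouples the two estimates into a linear, sequential argument ($\eta$ first, then $\zeta$ with $\eta$ as known forcing) and avoids your bootstrap entirely. What your approach buys is that it never invokes dichotomy roughness; what it costs is the absorption argument.

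One point in your write-up needs care. Your coupled inequalities are claimed to be ``valid uniformly in $t\geq t_\rho$,'' but the purely quadratic form $O(\|\mb\zeta_\e\|^2+\|\mb\zeta_\e\|\|\mb\eta_\e\|+\|\mb\eta_\e\|^2)$ of $\mb M_2$ and of the nonlinear part of $\mc R_2$ holds only where the cut-offs equal $1$; globally one only has the weaker bound $O(\delta^2+\delta\|\mb\eta_\e\|+\|\mb\eta_\e\|^2)$ recorded after \eqref{hatf}, and feeding \emph{that} into Duhamel produces an $O(\delta^2)$ tail, not $O(\rho+\e)$. So you must run the bootstrap as a continuation argument on the maximal interval on which $(\breve{\mb u}_\e,\breve{\mb v}_\e)\in E_{\breve\delta/2}(\ups(t))$, show the resulting bound $C(\rho+\e)$ is strictly smaller than $\breve\delta/2$ there, and conclude the interval is all of $[t_\rho,\infty)$; only then does Lemma \ref{lemma5}(a) identify the localized and original solutions. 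You gesture at this at the end, but it should be made the backbone of the absorption step rather than an afterthought, since without it the inequalities you bootstrap from are not the ones that actually hold.
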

\begin{proof}
 Using Lemma \ref{lemest1} with arbitrary $\rho<\frac{\breve\delta}{2}$ we consider \eqref{modsys} on $[t_{\rho},\infty)$; that is, with the initial conditions $\breve {\mb u}(t_\rho) =  {\mb u}_{\e}(t_\rho)$ and $\breve {\mb v}(t_\rho) =  {\mb v}_{\e}(t_\rho)$ for arbitrary fixed $\e<\e_\rho$. The initial conditions belong to  $I_\rho \subset I_{\breve\delta/2}$.
 As the first step, we consider a modified approximation for $\breve{\mb v}_\e$ whose error is defined by
\begin{equation}
\breve{\mb \eta}_\e(t) = \breve{\mb v}_\e(t) - \breve{\mb \phi}(\breve{\mb u}_\e(t),t) ,\quad t\geq t_\rho.
\label{ereat1}
\end{equation}
where $\breve{\mb u}_\e$ is the exact solution. We have,
\begin{equation}
\|\breve{\mb \eta}_\e(t_\rho)\| \leq {\rho}.
\label{eta0}
\end{equation}
Then
\begin{align*}
&\e\breve{\mb{\eta}}_{\e,\tau}(t) =\breve{{\mb g}}(\breve{{\mb u}}_\e(t), \breve{\mb {\eta}}_\e + \breve{\mb \phi}(\breve{\mb u}_\e(t),t),t,\e) -
\e [\breve{\mb \phi}(\breve{\mb u}_\e(t),t)]_{,t} \nn\\
&\phantom{xx}=\breve{ {\mb g}}_{,\mb v}(\breve{{\mb u}}_\e(t), \mb v^*(t),t,0)\breve{\mb {\eta}}_\e(t)  -\e [\breve{\mb \phi}(\breve{\mb u}_\e(t),t)]_{,t} + \e\breve{{\mb g}}_\e(\breve{{\mb u}}_\e(t), \breve{\mb {\eta}}_\e + \breve{\mb \phi}(\breve{\mb u}_\e(t),t),t,\e^*),
\end{align*}
where $\mb v^*$ is some point between $\breve{\mb v}_\e$ and the approximation, $\e^*$ is an intermediate point between 0 and $\e$ and we used $$\breve{{\mb g}}(\breve{{\mb u}}_\e(t),  \breve{\mb \phi}(\breve{\mb u}_\e(t),t),t,0)  \equiv 0.$$ Next we observe that
\begin{align*}
[\breve{\mb \phi}(\breve{\mb u}_\e(t),t)]_{,t} &=  \breve{\mb g}^{-1}_{,\mb v}(\breve{\mb u}_\e(t),\breve{\mb \phi}(\breve{\mb u}_\e(t),t),t,0)(\breve{\mb g}_{,\mb u}(\breve{\mb u}_\e(t),\breve{\mb \phi}(\breve{\mb u}_\e(t),t),t,0)
\breve{\mb f}(\breve{\mb u}_\e(t), \breve{\mb v}_\e(t), t,0) \\
&\phantom{xxx} + \breve{\mb g}_{,t}(\breve{\mb u}_\e(t),\breve{\mb \phi}(\breve{\mb u}_\e(t),t),t,0)
\end{align*}
is bounded in $t$ and $\e$ by Lemma \ref{lemma5} (since the solutions are bounded) and \eqref{tubeest1`}. Also $\breve{\mb \phi}(\breve{\mb u}_\e(t),t),t,\e^*)$ is bounded on solutions.  Using again \eqref{tubeest1`} and integrating as in \eqref{eta1}, we obtain that
\begin{equation}
\|\breve{\mb\eta}_\e(t)\| \leq c{\rho} + C\e.
\label{etaest1}
\end{equation}
Before we move on, we observe that by, say \cite[Theorem 2.6]{Lin}, the exponential dichotomy \eqref{expdich2} is satisfied in some $I_{\delta'}$, possibly with different constants $K_2,\alpha_2$. Then, noting that we can decrease the tube $I_{\breve\delta}$ without changing the constants (that can be left as they were for the larger set), we take $\rho, \e_\rho$ and $\breve\delta$ small enough that for $\e <\e_\rho$
\begin{align*}
&K_1\|\breve{\mb \zeta}_0\| + C_1\eta_\infty + C_2(\delta^2+ \delta\eta_\infty +\eta^2_\infty)+C_3\e\\
& \leq K_1\rho + C_1(c{\rho} + C\e) + C_2(\breve\delta^2+\breve\delta + (c{\rho} + C\e)^2) +C_3\e <\delta';
\end{align*}
that is, $\breve{\mb \zeta_\e}$ stays in the region where $\breve {\mb f}_{,\mb u}+ \breve{\mb f}_{,\mb v}\breve{\mb \phi}$  has the exponential dichotomy property. Then, for
$$
\breve{\mb\zeta}_\e(t) = \breve{\mb u}_\e(t)-\bar{\mb u}(t),
$$
we have, by $\mb f(\bar{\mb u}(t),
\mb{\phi}(\bar{\mb{ u}}(t),t),t,0) = \breve{\mb f}(\bar{\mb u}(t),
\mb{\phi}(\bar{\mb{ u}}(t),t),t,0)$,
\begin{align*}
&\breve{\mb \zeta}_{\e,t}(t) = \breve{\mb u}_{\e,t}(t) - \bar{\mb u}_{t}(t) \\
& = \breve{\mb f}(\breve{\mb u}_\e(t),\breve{\mb v}_\e(t),t,0) - \breve{\mb f}(\bar{\mb u}(t) +
\mb{\phi}(\bar{\mb{ u}}(t),t),t,0)+\e\breve{\mb f}_\e(\breve{\mb u}_\e(t),\breve{\mb v}_\e(t),t,\e^*)\\
&= \breve{\mb f}(\bar{\mb u}(t)+\breve{\mb\zeta}_\e(t),\breve{\mb\phi}(\bar{\mb u}(t) + \breve{\mb\zeta}_\e(t),t,0)+\breve{\mb \eta}_\e(t),t,0) - \breve{\mb f}(\bar{\mb u}(t), \mb\phi(\mb{\bar u}(t),t),t)\\
&\phantom{xx}+\e\breve{\mb f}_\e(\breve{\mb u}_\e(t),\breve{\mb v}_\e(t),t,\e^*)\\
&=(\breve{\mb f}_{,\mb u}( {\mb u}^*(t),\mb {v}^*(t),t,0) + \breve{\mb f}_{,\mb v}( {\mb u}^*(t),\mb {v}^*(t),t,0)\breve{\mb{\phi}}(\mb u^*(t),t))\breve{\mb \zeta}_\e(t) \\
&\phantom{xx}+ \breve{\mb f}_{,\mb v}( {\mb u}^*(t),\mb {v}^*(t),t,0)\breve{\mb \eta}_\e(t)+\e\breve{\mb f}_\e(\breve{\mb u}_\e(t),\breve{\mb v}_\e(t),t,\e^*),
\end{align*}
where again $\mb u^*, \mb v^*$ and $\e^*$ are some intermediate values. Taking into account \eqref{uest1} and \eqref{etaest1} and the fact that $\breve {\mb f}_{,\mb u}+ \breve{\mb f}_{,\mb v}\breve{\mb \phi}$  satisfies the exponential dichotomy property on $({\mb u}^*(t),\mb {v}^*(t),t,0)$, we obtain
\begin{equation}
\|\mb{\zeta}_\e(t)\|\leq C_4\rho + C_5\e, \quad t \in [\e\tau_\rho, \infty), \e \in [0,\e_\rho],
\label{zetaest2}
\end{equation}
for some constants $C_4, C_5$. Then, using \eqref{brphi} and the Lipschitz continuity of $\breve{\mb \phi}$,
\begin{align*}
\|\breve{\mb v}_\e(t)- \mb{\phi}(\bar{\mb u}(t),t)\| &\leq \|\breve{\mb \eta}_\e(t)\| + \|\breve{\mb{\phi}}(\bar{\mb u}(t),t) -  \breve{\mb{\phi}}(\breve{\mb u}_\e(t),t)\|\\
&\leq c\rho + C\e + \breve L(C_4\rho + C_5\e) = C_6 \rho +C_7 \e.
\end{align*}
Finally, selecting $\rho_0$ and the corresponding $\e_{\rho_0}$ so that $C_6\rho+C_7\e <\breve\delta/2$ and  $C_4\rho+C_5\e <\breve\delta/2$, w obtain $\breve{\mb u}_\e(t) = \mb u_\e(t)$ and $\breve{\mb v}_\e(t) = \mb v_\e(t)$ and hence \eqref{ltest} follows.
\end{proof}
\begin{theorem}
There is $\varrho_0>0$ such that for any $0<\varrho<\varrho_0$ there is $\e_\varrho$ such that for any $\e\in [0,\e_\varrho]$ and $t\in [0,\infty)$
\begin{subequations}\label{mest1}
\begin{align}
 \left\|\mb v_\e(t) - \mb \phi(\bar{\mb u}(t),t)- \ti{\mb v}_0\left(\frac{t}{\e}\right)\right\|&\leq \varrho,\label{mest1a}\\
 \left\|\mb u_\e(t) - \bar{\mb u}(t)\right\|&\leq \varrho.\label{mest1b}
  \end{align}
\end{subequations}
\label{mthm1}
\end{theorem}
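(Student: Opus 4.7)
The plan is to prove both estimates in \eqref{mest1} by splitting the time axis into the initial layer $[0,\e\tau_\rho]$ and the bulk region $[\e\tau_\rho,\infty)$, and then gluing the corollary that follows Lemma \ref{lemest1} to the long-time bounds \eqref{ltest}. On the initial layer the approximation $\ti{\mb v}_0(t/\e)$ is essential because $\mb v_\e$ has not yet relaxed onto the quasi-steady state, while on the bulk the same correction is already negligible by \eqref{ilest1}, so the quasi-steady state bound is what does the work.

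Given $\varrho>0$, the first step is to fix $\rho<\rho_0$ small enough that $(c+\tfrac{1}{6})\rho<\tfrac{\varrho}{2}$ and $(C_4+1)\rho<\tfrac{\varrho}{2}$, using the constants produced by the large-time lemma. With this $\rho$, I invoke Lemma \ref{lemest1} and its corollary to obtain $\tau_\rho$ and $\e_\rho^{(1)}$ such that
\begin{align*}
\left\|\mb v_\e(t)-\mb\phi(\bar{\mb u}(t),t)-\ti{\mb v}_0(t/\e)\right\|&\le \rho,\\
\|\mb u_\e(t)-\bar{\mb u}(t)\|&\le \rho,
\end{align*}
on $[0,\e\tau_\rho]$ whenever $\e<\e_\rho^{(1)}$. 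Note that the estimate \eqref{initest4}, originally stated at $t_\rho$, carries over to any $t\in[0,\e\tau_\rho]$ by combining \eqref{initest1a} with the continuity of $\bar{\mb u}$ near $0$, possibly after shrinking $\e_\rho^{(1)}$.

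For the bulk region I apply the previous lemma to get $\e_\rho^{(2)}$ and constants $c,C,C_4,C_5$ such that $\|\mb v_\e(t)-\mb\phi(\bar{\mb u}(t),t)\|\le c\rho+C\e$ and $\|\mb u_\e(t)-\bar{\mb u}(t)\|\le C_4\rho+C_5\e$ for $t\in[\e\tau_\rho,\infty)$. To recover the form \eqref{mest1a}, I add and subtract $\ti{\mb v}_0(t/\e)$ and use the fact that, by the choice of $\tau_\rho$ in Lemma \ref{lemest1} together with the definition $\ti{\mb v}_0(\tau)=\hat{\mb v}_0(\tau)-\mb\phi(\mr{\mb u},0)$, one has $\|\ti{\mb v}_0(\tau)\|\le \rho/6$ for all $\tau\ge\tau_\rho$ (equivalently, this also follows from \eqref{ilest1} by enlarging $\tau_\rho$ if needed). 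Consequently, on $[\e\tau_\rho,\infty)$,
\begin{equation*}
\left\|\mb v_\e(t)-\mb\phi(\bar{\mb u}(t),t)-\ti{\mb v}_0(t/\e)\right\|\le c\rho+C\e+\tfrac{\rho}{6}.
\end{equation*}

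Setting $\e_\varrho=\min(\e_\rho^{(1)},\e_\rho^{(2)})$ and shrinking it further so that $C\e_\varrho<\tfrac{\varrho}{2}$ and $C_5\e_\varrho<\tfrac{\varrho}{2}$, the combination of the two regions yields \eqref{mest1a} and \eqref{mest1b} on the whole half-line. I expect the only delicate point to be the bookkeeping: the threshold $\tau_\rho$ serves simultaneously as the end of the initial layer in Lemma \ref{lemest1} and as the point beyond which $\ti{\mb v}_0$ is negligible, so one must verify that a single $\tau_\rho$ (possibly enlarged) achieves both goals, and that the constants $c,C,C_4,C_5$ from the bulk lemma do not depend on the eventual choice of $\rho$, $\varrho$, or $\e_\varrho$. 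All the substantive analytic work — the localization of \eqref{CE1}, the exponential dichotomy transfer, and the differential-inequality argument — has already been carried out, so the theorem reduces to this patching.
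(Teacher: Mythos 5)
Your proposal is correct and follows essentially the same route as the paper: estimate on the initial layer $[0,\e\tau_\rho]$ via Lemma \ref{lemest1} and its corollary, estimate on $[\e\tau_\rho,\infty)$ via the large-time lemma, and absorb the initial-layer corrector there using $\|\ti{\mb v}_0(\tau)\|\le\rho/6$ for $\tau\ge\tau_\rho$ from \eqref{es1}. The only cosmetic difference is that the paper runs the argument with two separate parameters $\rho$ and $\rho'$ for the $\mb u$- and $\mb v$-estimates, whereas you use a single $\rho$; your explicit caveats (a single $\tau_\rho$ sufficing for both roles, and the constants $c,C,C_4,C_5$ not depending on $\rho$) are exactly the bookkeeping the paper also relies on.
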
\begin{proof} Let us take arbitrary  $\rho>0$ and let $\tau_\rho$ and $\e_\rho$ be the corresponding values of $\tau$ and $\e$ determined in Lemma \ref{lemest1}. Let us take any $\e<\e_\rho$. Then noting that, by \eqref{initest1a}, inquality \eqref{initest4} holds uniformly on $[0,\e\tau_\rho]$, we see that \eqref{mest1b} holds   on $[0,\e\tau_\rho]$ as long as $\rho<\varrho$. Then \eqref{zetaest2} implies  \eqref{mest1b} on $[\e\tau_\rho, \infty)$ for $\e\in [0,\e_\varrho],$ provided $\rho$ and $\e_\rho$ are such that   $C_4\rho + C_5\e_\rho\leq \varrho \leq \breve\delta/2$.

Further, by \eqref{es1}, for any $\rho'>0$ and appropriate $\tau_{\rho'}$ and $\e_{\rho'}$ we have $$
\left\|\ti{\mb v}_0\left(\frac{t}{\e}\right)\right\|\leq \frac{\rho'}{6}
$$
for $t\geq \e\tau_{\rho'}, \e<\e_{\rho'},$ and thus, by \eqref{etaest1}, for this range of $t$ and $\e$ we have
$$
\left\|\mb v_\e(t) - \mb \phi(\bar{\mb u}(t),t)- \ti{\mb v}_0\left(\frac{t}{\e}\right)\right\|\leq c{\rho'} + C\e + \frac{\rho'}{6}.
$$
Combining the above with \eqref{vest3} and selecting $\rho'\leq \varrho$ and $\e_{\rho'}$ such that $c\frac{\rho'}{2} + C\e\leq \breve\delta/2$ and  $c\frac{\rho'}{2} + C\e + \frac{\rho'}{6}\leq \varrho$ for $\e \in [0, \e_{\rho'}]$ we obtain \eqref{mest1a}. Thus, \eqref{mest1} holds for $\e\in [0, \min\{\e_\rho,\e_{\rho'}\}].$
\end{proof}

\section{An application to derive an Allee type dynamics}

A population displays the so-called Allee type dynamics if it has some carrying capacity to which it monotonically increases if it is large enough but goes extinct  if it is too small. Mathematically, the equation describing the evolution of the population should have three equilibria: asymptotically stable 0 as the extinction equilibrium, the repelling threshold equilibrium and the attractive carrying capacity. One of the ways to derive equations of this type is to look at populations interacting with each other according to the mass action law and exploiting multiple time scales occurring in such models. We present an example introduced in \cite{Thieme} and further analysed in \cite{BanLach}. In this model we consider a population $N$ of females subdivided into subpopulations $N_1$ of females who recently have mated and $N_2$  of females who are searching for a
mate. We assume that there is an equal number $N$ of males. If the females reproduce in a
very short time after mating, then
\begin{align}
{N}_{1,t}  &=\beta N_1-(\mu+\nu N)N_1-\sigma N_1+\xi NN_2,\nn\\
{N}_{2, t}  &=-(\mu+\lambda+\nu N)N_2+\sigma N_1-\xi NN_2, \label{eq2}
\end{align}
where $\beta$ is the reproduction rate of the recently
mated females, $\mu+\nu N$ is the mortality rate
of the recently mated females, $\mu+\lambda+\nu N$ denotes the (increased) mortality rate of the searching females, $\sigma$
denotes the rate at which the females switch from the reproductive
stage to the searching stage and $\xi N$ denotes the per capita
rate at which a searching female finds one out of $N$ potential
mates.

To nondimensionalize the system, we rescale the time in units of the natural life  expectancy $1/\mu,$ $s = \mu t,$  and, assuming $\beta-\mu>0,$  we introduce the carrying capacity
$$
K=\frac{\beta-\mu}{\nu}
$$
and setting $N_1=xK,$  $N_2=yK$ and $z = x+y$, we obtain our system in
dimensionless form,
\begin{equation}
\begin{aligned}
\mu x_{,s} & =(\beta-\mu)x(1-z)-\sigma x +\xi Kyz,\\
\mu {y}_{,s} & =-(\mu+\lambda +\nu Kz)y+\sigma x -\xi Kyz,
\end{aligned}
\label{huha}
\end{equation}
where $s$ is the rescaled time. Let us denote $\varepsilon=\frac{\mu}{\sigma};$
 that is, $\varepsilon$ is the ratio of the average time of satiation to the average life span. In
many cases it is a small parameter (for instance, for wild rabbits the average lifespan is 4 years and they breed 6-7 times per year, giving $\e \approx 0.04$). For the population not to become extinct, we can argue that the rate at which a
female finds a mate should be comparable with the rate she switches to a searching mode
after reproduction, see \cite{BanLach} for a discussion of other cases. Thus, writing $\xi/\mu  = \bar{\xi}/\e$ and denoting
$R_0=\frac{\beta}{\mu},$
 we consider
\begin{eqnarray}
z_{,s} & = & (z-y)(R_0-1)(1-z)-\left(1+\frac{\lambda +\nu Kz}{\mu}\right)y,\quad z(0)= \mr{z}\nonumber\\
y_{,s} & = & -\left(1+\frac{\lambda +\nu Kz}{\mu}\right)y -\frac{\bar \xi K}{\e}yz +\frac{1}{\e} (z-y), \quad y(0) =\mr{y}.
 \label{allsys3}
\end{eqnarray}
The right hand side of the first equation can be simplified as
\begin{eqnarray*}
&&(z-y)(R_0-1)(1-z)-\left(1+\frac{\lambda +\nu
Kz}{\mu}\right)y= (R_0-1)z(1-z) - \frac{\beta+\la}{\mu}y
\end{eqnarray*}
so that we finally consider
\begin{eqnarray}
z_{,s} &=&(R_0-1)z(1-z) - \frac{\beta+\la}{\mu}y, \quad z(0)= \mr {z}, \nn\\
\varepsilon y_{,s}&=&-\e\left(1+\frac{\lambda +\nu Kz}{\mu}\right)y -{\bar \xi K}yz +z-y,\quad y(0) = \,\mr {y}.
 \label{allsys4}
\end{eqnarray}
We obtain the quasi steady state as
\begin{equation}
y = \phi(z) = \frac{z}{1 +\bar \xi K z} \label{qss1}
\end{equation}
and hence the reduced equation is given by
\begin{equation}
{\bar{z}}_{,s} = (R_0-1)\bar z (1-\bar z) -
\frac{\beta+\la}{\mu}\frac{\bar z}{1 +\bar\xi K \bar z}, \quad z(0) = \mr z.
\label{allee1}
\end{equation}
The auxiliary equation \eqref{hav0} here takes the form
\begin{equation}
\frac{\mathrm{d}\,\hat y}{\mathrm d\tau} = -\hat y(1+{\bar \xi K}\mr z) +\mr z.
\label{allaux}
\end{equation}
We are interested in $\mr z \in [0,\infty)$. Then the only equilibrium of
(\ref{allaux}) is  $\hat y = \phi(z)\geq 0$ and the right
hand side is decreasing for $\mr z> - 1/\bar \xi K$. So, \eqref{mug} is satisfied.
Further, by the above, any
nonnegative $\mr {y}$ belongs to the domain of attraction of the
equilibrium solution.

Let us have a closer look at (\ref{allee1}) and find out what dynamics it describes. The stationary points are determined from the equation
$$
0=z\left((R_0-1)(1-z) -
\frac{\beta+\la}{\mu}\frac{1}{1 +\bar\xi Kz}\right).
$$
This immediately gives $z_1=0.$ Then the zeroes of the expression in the brackets can be determined as the solutions to
\begin{equation}
g(z):= (1-z)(1+\bar\xi Kz) = \frac{\beta+\la}{\beta-\mu}.
\label{All}
\end{equation}
The graph of $g$ is a downward parabola with the roots at  $z=1$ and $z = -1/\bar\xi K$ and the vertical axis intercept at $1$. It takes the maximum of $\frac{1}{4}\left(1+ \frac{1}{\bar\xi K}\right)^2$ at $z_{max} = \frac{\bar\xi K-1}{2\bar\xi K}$. The model exhibits the Allee effect if and only if \eqref{All} has two positive solutions $0<z_2<z_3$ (the required stability is ensured by the parabola being downward); that is, if and only if
\begin{equation}
\bar\xi K >1, \quad 1 < \frac{\beta+\la}{\beta-\mu} < \frac{1}{4}\left(1+ \frac{1}{\bar\xi K}\right)^2.
\label{All2}
\end{equation}
In particular, we must have $R_0>1$.

Thus, under \eqref{All2}, any solution to the limit equation \eqref{allee1} with $0\leq \mr z <z_2$ converges to 0, while if $z_2<\mr z\leq z_2$, then the corresponding solution converges to $z_3$. Since the stationary points $0$ and $z_3$ are hyperbolic, Assumption (A5) is satisfied by Remark \ref{rem3} and  we can claim that the solution
$(z_\e(t), y_\e(t))$ of (\ref{allsys4}) with $\mr {z} \in (0,z_2)$ or $\mr {z} \in (z_2,z_3)$ and $\mr {y}>0$
satisfies
\begin{eqnarray*}
\lim\limits_{\e\to 0} z_\e(t) &=& \bar z(t),\quad{\rm on}\;[0,\infty),\\
\lim\limits_{\e\to 0} y_\e(t) &=& \frac{\bar z(t)}{1 +\bar \xi K
\bar z(t)},\quad{\rm on}\;(0,\infty),
\end{eqnarray*}
 where $\bar z$ solves (\ref{allee1}). Furthermore,  the initial layer correction is given by
$$
\hat y\left(\frac{t}{\e}\right) =\, \mr {y} +\frac{\mr {z}}{1+\bar \xi K
\mr {z}}\left(1-e^{-\frac{(1+\bar \xi K\mr z)t}{\e}}\right)
$$
and thus
\begin{eqnarray*}
&&\lim\limits_{\e\to 0} \left(\!y_\e(t)-\frac{\bar z(t)}{1 +\bar \xi K
\bar z(t)}-\left(\mr  {y} +\frac{\mr  {z}}{1+\bar \xi K
\mr  {z}}\left(1-e^{-\frac{(1+\bar \xi K\mr z)t}{\e}}\right)\right) -
\frac{ \mr  {z}}{1 +\bar \xi K \mr  {z}}\! \right) \\
&&\phantom{xxx}= 0
\end{eqnarray*}
uniformly $[0,\infty]$. Thus the long term dynamics of \eqref{allsys3} and \eqref{qss1}, \eqref{allee1} are equivalent.

\begin{figure}
 \centering
 \includegraphics[width=4.5cm,height=6cm,angle=270,bb=50 50 554 770,clip=true,trim=2pt 0pt 0pt 0pt]{./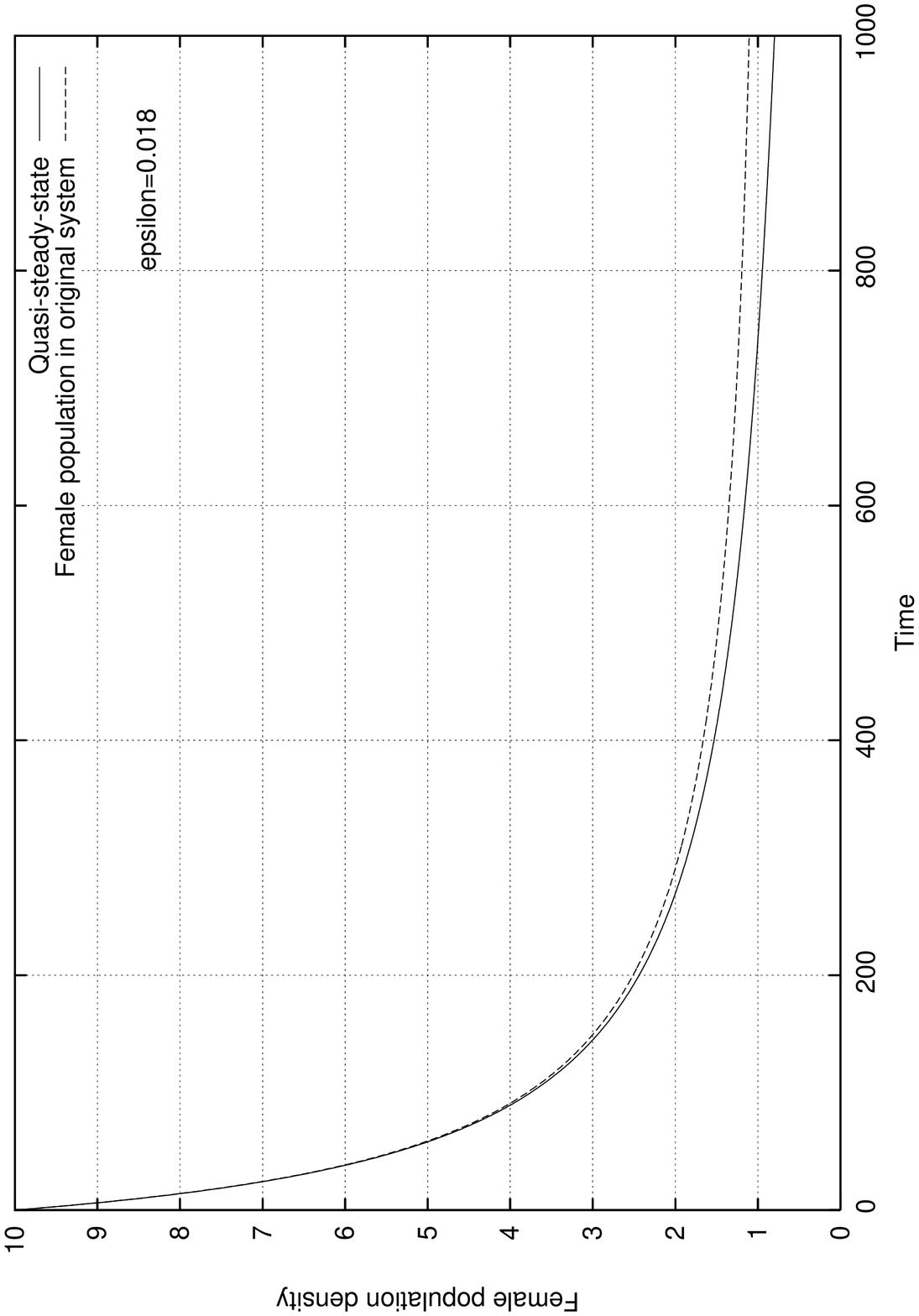} \\
 \includegraphics[width=4.5cm,height=6cm,angle=270,bb=50 50 554 770,clip=true,trim=2pt 0pt 0pt 0pt]{./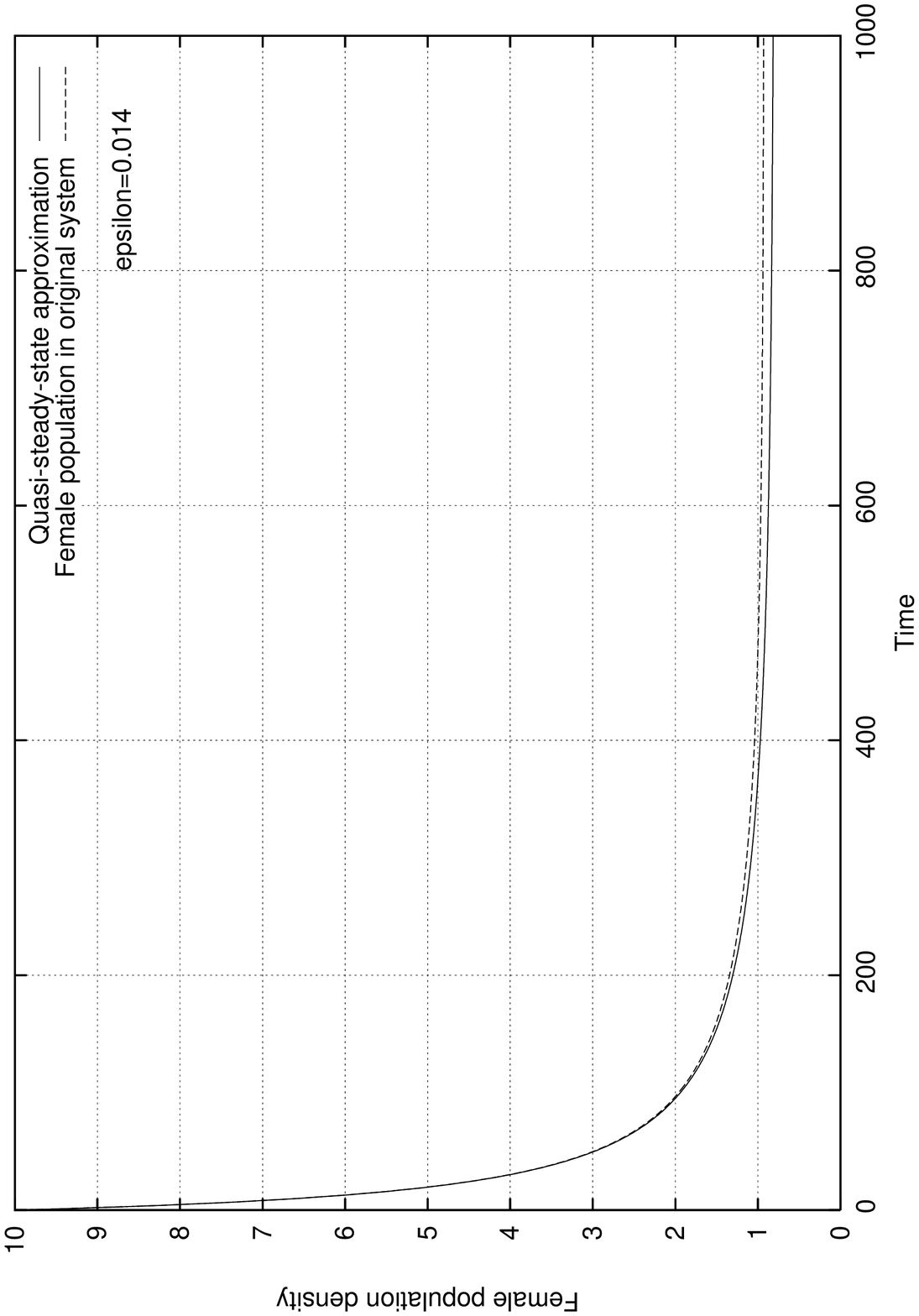}
 \caption{Comparison of the total female population $z,$ given by (\ref{allsys4}) with the quasi steady state approximation $\bar z$ given by
  (\ref{allee1}) for $\e =0.018$ (top) and $\e=0.014$ (bottom).}
\label{fig:2}
\end{figure}

\appendix
\section{Spectral bound and exponential dichotomy}
In general, the spectral bound of a time dependent matrix $D(t)$ does not determine whether it has the exponential dichotomy property. The situation changes fortunately for singularly perturbed linear equations. We recall the relevant result from \cite{VasBut} together with the proof that originally is in  Russian. Going through the proof is also useful to ascertain that, under some additional assumptions, the result remains valid on $\mbb R_+$.
\begin{lemma} \cite[Lemma 3.2]{VasBut} Assume that $\mbb R_+\ni t\mapsto D(t)$ is a bounded, uniformly continuous $m\times m$ matrix function with eigenvalues $\la_i (t)$, $1\leq i\leq n(t)$, where $n(t)$ is the number of distinct eigenvalues of $D(t)$, satisfying
\begin{equation}
\sup\limits_{t\in \mbb R_+}\Re \la_i(t) < -2\sigma<0.
\label{eigenest}
\end{equation}
Then there is $c>0$ and $\e_0>0$ such that for any $0<\e\leq \e_0$ the fundamental matrix $Y(t,\e)$ of the system
\begin{equation}
\e Y_{t} = D(t)Y, \qquad Y(0,\e) = I_{R^m},\label{LS2}
\end{equation}
satisfies
\begin{equation}
\|Y(t,\e)Y^{-1}(s,\e)\|\leq ce^{-\frac{\sigma(t-s)}{\e}}, \quad 0\leq s\leq t < \infty.
\label{Yest}
\end{equation}
\label{lemeigenest}
\end{lemma}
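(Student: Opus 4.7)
My plan is the classical freezing-coefficients argument: obtain a uniform constant-coefficient bound with decay faster than the desired $\sigma/\e$, partition $\mbb R_+$ into intervals on which $D$ is nearly constant, apply a variation-of-parameters perturbation estimate on each piece, and chain the local estimates with a partition length of order $\e$ so that the accumulated multiplicative constants can be absorbed into the exponential while preserving the final decay rate $-\sigma/\e$.

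\textbf{Step 1: uniform constant-coefficient bound.} Since $D$ is bounded, the closure $\mc K$ of $\{D(t):t\in\mbb R_+\}$ is a compact subset of $\mbb R^{m\times m}$. By continuity of the spectrum (as the set of roots of the characteristic polynomial, in Hausdorff distance), every $A\in\mc K$ inherits $\sigma(A)\subset\{\Re z\leq s\}$ for some $s<-2\sigma$. Choosing any $\beta\in(\sigma,-s)$, I would establish
\begin{equation*}
\|e^{\tau A/\e}\|\leq C_0\,e^{-\beta\tau/\e},\qquad \tau\geq 0,\;\e>0,\;A\in\mc K,
\end{equation*}
via the Dunford integral $e^{\tau A/\e}=(2\pi i)^{-1}\oint_\Gamma e^{\tau z/\e}(zI-A)^{-1}\,dz$ on a fixed rectangular contour $\Gamma\subset\{\Re z\leq-\beta\}$ that encloses the spectra of all $A\in\mc K$; the resolvent $(zI-A)^{-1}$ is continuous in $(z,A)$ on the compact set $\Gamma\times\mc K$, hence its norm is uniformly bounded there. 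A Jordan-form argument works equally well, the polynomial factors from nilpotent parts being absorbed in the gap between $2\sigma$ and $\beta$.

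\textbf{Step 2: local perturbation estimate.} By uniform continuity, for any $\mu>0$ there is $h_\mu>0$ with $\|D(t)-D(r)\|\leq\mu$ whenever $|t-r|\leq h_\mu$. Partition $\mbb R_+$ by $t_k=kh$ with $h\leq h_\mu$ (to be chosen), write $D(t)=D(t_k)+R_k(t)$ on $[t_k,t_{k+1}]$ with $\|R_k(t)\|\leq\mu$, and set $U(t,t_k)=Y(t,\e)Y^{-1}(t_k,\e)$. Variation of parameters gives
\begin{equation*}
U(t,t_k)=e^{(t-t_k)D(t_k)/\e}+\frac{1}{\e}\int_{t_k}^{t}e^{(t-r)D(t_k)/\e}R_k(r)\,U(r,t_k)\,dr,
\end{equation*}
and Step 1 combined with Gronwall's inequality (applied to $e^{\beta(t-t_k)/\e}\|U(t,t_k)\|$) yields the local bound
\begin{equation*}
\|U(t,t_k)\|\leq C_0\exp\!\left(-\frac{(\beta-C_0\mu)(t-t_k)}{\e}\right),\qquad t\in[t_k,t_{k+1}].
\end{equation*}

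\textbf{Step 3: chaining and choice of $h$.} For $0\leq s\leq t$ with $s\in[t_{k_0},t_{k_0+1}]$ and $t\in[t_{k_1},t_{k_1+1}]$, the cocycle property of the transition operator together with the local bound yields
\begin{equation*}
\|U(t,s)\|\leq C_0^{\,k_1-k_0+1}\exp\!\left(-\frac{(\beta-C_0\mu)(t-s)}{\e}\right)\leq C_0\exp\!\left(\left[\frac{\log C_0}{h}-\frac{\beta-C_0\mu}{\e}\right](t-s)\right),
\end{equation*}
using $k_1-k_0+1\leq(t-s)/h+1$. I would first fix $\mu$ so small that $\beta-C_0\mu>\sigma$, and then set $h=\e\log C_0/(\beta-C_0\mu-\sigma)$, which makes the bracketed exponent equal $-\sigma/\e$ exactly, giving the stated conclusion with $c$ an absolute multiple of $C_0$. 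The admissibility requirement $h\leq h_\mu$ then determines the threshold $\e_0=h_\mu(\beta-C_0\mu-\sigma)/\log C_0$.

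The main obstacle is Step 1: a pointwise Jordan-form estimate produces a constant depending on the condition number of the eigenvector basis, which can blow up as eigenvalues of $A$ coalesce and the matrix approaches a nontrivial Jordan structure. Boundedness of $D$ (so that $\mc K$ is compact in the finite-dimensional matrix space) together with the strict spectral gap $\sup_t\Re\lambda_i(t)<-2\sigma$ is precisely what removes this difficulty, either through the Dunford integral over a contour fixed uniformly away from all spectra in $\mc K$, or through upper semicontinuity of the map $A\mapsto\sup_{\tau\geq 0}\|e^{\tau A}\|e^{\beta\tau}$ on the compact set $\mc K$.
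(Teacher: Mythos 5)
Your proof is correct, but it follows a genuinely different route from the paper's. The paper also freezes coefficients, but it freezes at the \emph{right} endpoint $t$ of the interval under consideration, writes a single Volterra-type integral equation for $\mc Y(t,s,\e)=Y(t,\e)Y^{-1}(s,\e)$ on all of $[s,t]$, and bootstraps the weighted quantity $M(T,\e)=\max_{0\le s\le t\le T}\|\mc Y(t,s,\e)\|e^{\sigma(t-s)/\e}$: the integral term is shown to contribute at most $\tfrac12 M(T,\e)$ by splitting the integral at $q=t-\sqrt{\e}$, where for $q>t-\sqrt{\e}$ uniform continuity makes $\|D(q)-D(t)\|$ small while for $q<t-\sqrt{\e}$ the kernel $\e^{-1}e^{-\sigma(t-q)/\e}$ is exponentially small; no partition, no chaining, no Gronwall. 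Your version instead partitions $\mbb R_+$ into intervals of length $h\sim\e$, freezes at the left endpoints, applies Gronwall locally, and absorbs the accumulated factor $C_0^{N}$ into the exponent by tuning $h$ — a more modular but bookkeeping-heavier argument (note the count of factors is $k_1-k_0+1\le (t-s)/h+2$, not $+1$, which only changes $c$ by a factor of $C_0$; and the choice of $h$ tacitly assumes $C_0>1$, the case $C_0=1$ being trivial). A genuine advantage of your write-up is Step 1: the paper simply asserts the uniform bound $\|e^{D(t)\tau}\|\le c_0e^{-2\sigma\tau}$ "for some constant $c_0$", whereas the uniformity in $t$ is exactly the delicate point (the Jordan-basis condition number can blow up as eigenvalues coalesce), and your Dunford-integral argument over the compact closure of the range of $D$, with a contour fixed uniformly to the left of $-\beta$, supplies the missing justification; both proofs need this uniform estimate in an essential way.
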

\begin{proof}
Let $\mc Y(t,s,\e) = Y(t,\e)Y^{-1}(s,\e).$ For a given fixed $t_0\in \mbb R_+$ we write, by \eqref{LS2}, \begin{equation}
\e \mc Y_{t} = D(t_0)\mc Y + (D(t)-D(t_0))\mc Y, \qquad \mc Y(s,s,\e) = I_{R^m}\label{LS2a}
\end{equation}
and, by the variation of constants formula,
\begin{equation}
\mc Y(t,s,\e) = e^{\frac{D(t_0)(t-s)}{\e}} + \cl{s}{t}\frac{1}{\e} e^{\frac{D(t_0)(t-q)}{\e}}(D(q)-D(t_0))\mc Y(q,s,\e)dq.
\label{LS3}
\end{equation}
Since \eqref{LS3} is valid for any $t_0$, it is valid, in particular, for $t_0=t$; that is,
\begin{equation}
\mc Y(t,s,\e) = e^{\frac{D(t)(t-s)}{\e}} + \cl{s}{t}\frac{1}{\e} e^{\frac{D(t)(t-q)}{\e}}(D(q)-D(t))\mc Y(q,s,\e)dq.
\label{LS3}
\end{equation}
Let us define
\begin{equation}
w(t,s,\e) := \|\mc Y(t,s,\e)\|e^{\frac{\sigma(t-s)}{\e}},\quad 0\leq s\leq t<\infty,
\label{wdef}
\end{equation}
where $\sigma$ was defined in \eqref{eigenest}. Then, thanks to the fact that the inequality in \eqref{eigenest} is sharp, $\|e^{D(t)(t-s)}\| \leq c_0e^{-2\sigma( t-s)}$ for some constant $c_0$ (here $D(t)$ is treated as a constant matrix for a fixed $t$). Hence, multiplying \eqref{LS3} by $e^{\frac{\sigma(t-s)}{\e}}$ and taking norms, we obtain
\begin{equation}
w(t,s,\e) \leq c_0  +\frac{c_0}{\e}  \cl{s}{t} e^{-\frac{\sigma(t-q)}{\e}}\|D(q)-D(t)\|w(q,s,\e)dq.
\label{LS4}
\end{equation}
Now, for any fixed $T<\infty$ we define $M(T,\e) := \max\limits_{0\leq s\leq t\leq T}w(t,s,\e)$ so that
\begin{equation}
M(T,\e)\leq c_0 + \frac{c_0M(T,\e)}{\e} \max\limits_{0\leq s\leq t\leq T} J(t,s,\e),
\label{MT}
\end{equation}
where $$
J(t,s,\e) := \cl{s}{t} e^{-\frac{\sigma(t-q)}{\e}}\|D(q)-D(t)\|dq
$$
Next, thanks to the uniform continuity of $D(t)$,
$$
\delta(\e) := \max\limits_{0\leq t-\sqrt{\e} \leq q\leq t<\infty}\|D(q)-D(t)\|
$$
satisfies $\delta(\e) \to 0$ as $\e\to 0$, uniformly in $t$. In other words, if $t-s\leq \sqrt{\e}$, then $\|D(q)-D(t)\|\leq \delta(\e)$ whenever $q \in [s,t],$ irrespective of $s$ and $t$.  Thus, if $t-s\leq \sqrt{\e}$, then
\begin{equation}
J(t,s,\e) \leq \delta(\e) \cl{s}{t} e^{-\frac{\sigma(t-q)}{\e}}dq \leq \frac{\delta(\e)\e}{\sigma}.
\label{Jest}
\end{equation}
On the other hand, if $t-s> \sqrt{\e}$, then we split $J= J_1+J_2,$ where, denoting $c_1 = 2\sup\limits_{t\in \mbb R_+}\|D(t)\|$,
\begin{align*}
J_1(t,s,\e) &= \cl{s}{t-\sqrt{\e}} e^{-\frac{\sigma(t-q)}{\e}}\|D(q)-D(t)\|dq\leq c_1  \cl{s}{t-\sqrt{\e}} e^{-\frac{\sigma(t-q)}{\e}}dq\leq \frac{c_1\e}{\sigma}e^{-\frac{\sigma}{\e}}
\end{align*}
and  $J_2$ satisfies estimate \eqref{Jest}. Thus
$$
\max\limits_{0\leq s\leq t\leq T} J(t,s,\e)\leq \e\left(\frac{\delta(\e)}{\sigma} + \frac{c_1}{\sigma}e^{-\frac{\sigma}{\e}}\right)
$$
and the expression on the right hand side is independent of $T$. We can take $\e_0$ small enough for $\frac{\delta(\e)}{\sigma} + \frac{c_1}{\sigma}e^{-\frac{\sigma}{\e}}\leq \frac{1}{2}$ if $0<\e\leq \e_0$. Thus
$$
M(T,\e) \leq c_0+ \frac{M(T,\e)}{2};
$$
that is, $M(T,\e)\leq 2c_0$ irrespectively of $T$ and $0<\e\leq \e_0$. Thus, by \eqref{wdef},
$$
\|\mc Y(t,s,\e)\|\leq 2c_0e^{-\frac{\sigma(t-s)}{\e}},\quad 0\leq s\leq t<\infty, 0<\e\leq \e_0$$
and the proof is complete.
\end{proof}


\begin{thebibliography}{10}

\bibitem{Aug}
P.~Auger, R.~Bravo de~la Parra, J.-C. Poggiale, E.~S\'{a}nchez, and
  T.~Nguyen-Huu.
\newblock Aggregation of variables and applications to population dynamics.
\newblock In {\em Structured population models in biology and epidemiology},
  volume 1936 of {\em Lecture Notes in Math.}, pages 209--263. Springer,
  Berlin, 2008.

\bibitem{AUMCS}
J.~Banasiak.
\newblock Logarithmic norms and regular perturbations of differential
  equations.
\newblock {\em Annales Universitatis Mariae Curie-Skłodowska. Sectio A,
  Mathematica}, submitted.

\bibitem{BanLach}
J.~Banasiak and M.~Lachowicz.
\newblock {\em Methods of small parameter in mathematical biology}.
\newblock Modeling and Simulation in Science, Engineering and Technology.
  Birkh\"{a}user/Springer, Cham, 2014.

\bibitem{Carr}
J.~Carr.
\newblock {\em Applications of centre manifold theory}, volume~35 of {\em
  Applied Mathematical Sciences}.
\newblock Springer-Verlag, New York-Berlin, 1981.

\bibitem{Cop}
W.~A. Coppel.
\newblock {\em Stability and asymptotic behavior of differential equations}.
\newblock D. C. Heath and Co., Boston, Mass., 1965.

\bibitem{Fen}
N.~Fenichel.
\newblock Geometric singular perturbation theory for ordinary differential
  equations.
\newblock {\em J. Differential Equations}, 31(1):53--98, 1979.

\bibitem{Hop}
F.~C. Hoppensteadt.
\newblock Singular perturbations on the infinite interval.
\newblock {\em Trans. Amer. Math. Soc.}, 123:521--535, 1966.

\bibitem{Lin}
Z.~Lin and Y.-X. Lin.
\newblock {\em Linear systems exponential dichotomy and structure of sets of
  hyperbolic points}.
\newblock World Scientific Publishing Co., Inc., River Edge, NJ, 2000.

\bibitem{MarCz}
A.~Marciniak-Czochra, A.~Mikeli\'{c}, and T.~Stiehl.
\newblock Renormalization group second-order approximation for singularly
  perturbed nonlinear ordinary differential equations.
\newblock {\em Math. Methods Appl. Sci.}, 41(14):5691--5710, 2018.

\bibitem{Mass}
J.~L. Massera.
\newblock On {L}iapounoff's conditions of stability.
\newblock {\em Ann. of Math. (2)}, 50:705--721, 1949.

\bibitem{Ort}
J.~M. Ortega.
\newblock {\em Numerical analysis}, volume~3 of {\em Classics in Applied
  Mathematics}.
\newblock Society for Industrial and Applied Mathematics (SIAM), Philadelphia,
  PA, second edition, 1990.

\bibitem{Thieme}
H.~R. Thieme.
\newblock {\em Mathematics in population biology}.
\newblock Princeton Series in Theoretical and Computational Biology. Princeton
  University Press, Princeton, NJ, 2003.

\bibitem{TVS}
A.~N. Tikhonov, A.~B. Vasil`{e}va, and A.~G. Sveshnikov.
\newblock {\em Differential equations}.
\newblock Springer Series in Soviet Mathematics. Springer-Verlag, Berlin, 1985.

\bibitem{VasBut}
A.~B. Vasil`{e}va and V.~F. Butuzov.
\newblock {\em {\cyr Asimptoticheskie metody v teorii singulyarnykh
  vozmushcheni\u{i}}}.
\newblock {\cyr Aktual\cprime nye Voprosy Prikladno\u{i}} {\cyr i
  Vychislitel\cprime no\u{i}} {\cyr Matematiki}. [Current Problems in Applied
  and Computational Mathematics]. ``Vyssh. Shkola'', Moscow, 1990.

\bibitem{Vlad1}
V.~S. Vladimirov.
\newblock {\em Generalized functions in mathematical physics}.
\newblock ``Mir'', Moscow, 1979.

\end{thebibliography}
\end{document}